\theoremstyle{plain}
\newmdtheoremenv[style=shadeStyle]{theorem}{Theorem}[section]
\newmdtheoremenv[style=shadeStyle]{lemma}[theorem]{Lemma}
\newmdtheoremenv[style=shadeStyle]{proposition}[theorem]{Proposition}
\newmdtheoremenv[style=shadeStyle]{corollary}[theorem]{Corollary}
\newmdtheoremenv[style=shadeStyle]{remark}[theorem]{Remark}
\renewcommand{\abstract}{Abstract.}
\theoremstyle{definition}
\newcommand{\rd}{\mathrm{d}}
\numberwithin{equation}{section}
\begin{document}

	\title{\bf Suppression of  Blowup  by Slightly Superlinear  Degradation in a Parabolic-Elliptic Keller--Segel System with Signal-dependent Motility}
	\author[1,2]{Aijing Lu}
	\author[1,2]{Jie Jiang \thanks{Corresponding author: jiang@apm.ac.cn}}
	\affil[1]{		Innovation Academy for Precision Measurement Science and Technology, Chinese Academy of Sciences, Wuhan 430071, China}
	\affil[2]{University of Chinese Academy of Sciences, Beijing 100049, China}
	\date{ }
	\maketitle
	
	\begin{abstract}
		In this paper, we consider an initial-Neumann boundary value problem for a parabolic-elliptic Keller-Segel system with signal-dependent motility  and a source term. Previous research has rigorously shown that the source-free version of this system exhibits an infinite-time blowup phenomenon when dimension $N \geq 2$. In the current work, when $N \leq 3$, we establish uniform boundedness of global classical solutions  with an additional source term that involves  slightly super-linear degradation effect on the density, of a maximum  growth order $s\log s$, unveiling a sufficient blowup suppression mechanism. The motility function considered in our work takes a rather general form compared with  recent works \cite{FuJi2020, LyWa2023}  which were restricted to the monotone non-increasing case.  The cornerstone of our proof lies in deriving an upper bound for the second component of the system and an entropy-like estimate, which are achieved  through tricky comparison skills and energy methods, respectively. \\
		\\	\textit{Keywords:} Global existence; Boundedness; Comparison; Chemotaxis                                                                                                                                                                                                                                                                                                                                                           
	\end{abstract}                                                                                                                            	
	\section{Introduction}

	Chemotaxis is the phenomenon that cells or organisms move in a directed manner in response to chemical signals. The study of chemotaxis can be traced back to Patlak's work in 1950 \cite{Patlak1953}.  In 1970, Keller and Segel  proposed a classic chemotaxis model in   \cite{ Keller1970,1971KS}, which is described by the following equations, 
	\begin{equation}\label{0.2}
		\left\{
		\begin{aligned}
			& u_t = \nabla \cdot \left( \gamma(v)\nabla u-\chi (v)u\nabla v\right), \\
			&  \tau v_t=\Delta v-v+u.
		\end{aligned}
		\right.
	\end{equation}
	Here, $\tau \geq0$ is a given constant. $u$ and $v$ represent the density of cells, and the concentration of chemical signals, respectively. $\gamma(\cdot)$ is the motility coefficient of cells, while $\chi(\cdot)$ represents the sensitivity of cells to chemical signals. The relationship between $\gamma(\cdot)$ and $\chi(\cdot)$ is given by
	\begin{equation*}
		\chi(\cdot)=(\alpha-1)\gamma'(\cdot),
	\end{equation*}
	where $\alpha$ is a given non-negative constant representing a rescaled distance between signal receptors in cells.  When 
	$\alpha > 0$,  cells can perceive the concentration of chemical signals at different positions, allowing them to determine the   direction of cell movement. This mechanism is commonly referred to as gradient sensing. When $\alpha=0$, cells only possess one receptor,  limiting their ability to detect the chemical signal to a single point, which  is called as the local sensing mechanism \cite{BLT2020,1971KS}. In the latter case,  $\chi=-\gamma'$ and hence system \eqref{0.2} can be re-written as 
	\begin{equation}\label{0.3}
		\left\{
		\begin{aligned}
			& u_t = \Delta(\gamma(v)u),\\
			&  \tau v_t=\Delta v-v+u.
		\end{aligned}
		\right.
	\end{equation}
	In the above model, $\gamma'(v)<0$ corresponds to chemotactic attraction, where bacteria are attracted to regions of higher chemical signal concentration. Conversely, $\gamma'(v)>0$ corresponds to chemotactic repulsion, where bacteria move away from regions of higher chemical signal concentration.
	
	Recently, numerous research results have been obtained for the initial-Neumann boundary value problem of system \eqref{0.3}. Existence of globally bounded classical solutions was established in any dimension  $N\geq 1$, when either $\gamma$ has strictly positive upper and lower  bounds \cite{TW2017, XiaoJiang2022}, or $\tau=0$ and $\gamma$ is non-decreasing or unbounded \cite{JiLa2023}. If $\gamma$ is non-increasing and tends to zero at infinity, problem \eqref{0.3} may become degenerate as $v \rightarrow \infty$, and it was shown under such a  case  that classical solution exists globally  for $N \geq 1$ \cite{AhnYoon2019,FuJi2020,FuJi2021b,FuSe2022a,JiLa2021,JLZ2022,XiaoJiang2022,YoonKim17}. Moreover, the dynamic behavior of the  solutions was shown to be  closely related to the decay rate of  $\gamma$ at infinity. In particular, when $\gamma(v)= e^{-v}$,  an infinite-time blowup was rigorously proved  which is  distinct from the well-known finite-time blowup phenomenon for the classical Keller-Segel system \cite{FuJi2020, FuSe2022a}. For studies on  weak solutions of \eqref{0.3},  we refer the readers to \cite{BLT2020,Desvillettes2019,LiJi2021}.

	More recently, 	system \eqref{0.3} with an additional standard logistic source term as follows was proposed in \cite{FuLiu2012stripe}  to study the formation principles for various patterns in ecological systems:
	\begin{equation}\label{0.4}
		\left\{
		\begin{aligned}
			& u_t = \Delta(\gamma(v)u)+\mu (u-u^2), \\
			&  \tau v_t-\Delta v+v=u.
		\end{aligned}
		\right.
	\end{equation}
	Here, $\mu> 0$. For problem \eqref{0.4}, when $N=2$ and $\tau=0$, under a condition $\sup\limits_{0\leq s <\infty} \frac{|\gamma'(s)|^2}{\gamma(s)}<\infty$,  existence of globally bounded classical solution was established in \cite{Jin2020keller}, and the additional condition on $\frac{|\gamma'|^2}{\gamma}$ was later removed in \cite{FuJi2020}. When $N=2$ and $\tau >0$,  supposing that $\frac{|\gamma'(s)|}{\gamma(s)}<\infty$, existence of globally bounded classical solutions  was proved in \cite{JKW2018}.  We refer the readers to \cite{LiuXujiao2019,Tello2022,WangWang2018JMP} for studies in  higher-dimensional case $N \geq 3$, where sufficient largeness of $\mu$ was necessarily needed. On the other hand, generalized logistic-type sources as $\mu(u-u^\sigma)$ with some $\sigma>1$ were also considered in the literature. When $\tau=0$, uniform boundedness of globally classical solution was verified in \cite{LyWa2023},  when any of following holds: (i) $N\leq 2$, $\sigma>1$, (ii) $N\geq 3$, $\sigma>2$, or (iii) $\sigma =2$ and $\mu$  is sufficiently large. Existence of weak solutions for problem \eqref{0.4}  with a  more general source $uh(u)$ satisfying	$ (h(s)\log s) /s \rightarrow -\infty$ as $ s\rightarrow \infty$,  was shown  in \cite{Desvillettes2023weak} provided that $\sup_{s>s_0}(\gamma(s)+s|\gamma'(s)|^2/\gamma(s))<\infty$ for all $s_0> 0$.

	The motivation of this paper comes from an observation that when $\mu =0$ and $\gamma=e^{-v}$, the solution of problem \eqref{0.3} exhibits a blow-up phenomenon as $t\rightarrow \infty $ \cite{FuJi2020,FuJi2021a,FuSe2022a} for $N\geq2$. It is expected that  an external source term with a degradation effect on the density would lead to globally bounded solutions. Previous studies have demonstrated that the logistic source term $\mu (u-u^2)$ or its generalized form $\mu (u-u^\sigma)$ can prevent blow-up \cite{ FuJi2020,JKW2018,Jin2020keller,LyWa2023}, which, however, requires a strong dampening when $N\geq3$, by assuming that $\mu$ is sufficiently large in the standard logistic source, or a super-quadratic growth order $\sigma>2$ in the latter.
	The aim of this study is to determine the critical growth condition for an external source that can guarantee the boundedness of the classical solution, unveiling an adequate suppression mechanism on the blowup of density detected in the source-free version of \eqref{0.4}.
	
	More precisely, we study  the  following initial-Neumann boundary value problem in a smooth bounded domain $\Omega \subset \mathbb{R}^N$ with $N\geq1$.
	\begin{subequations}\label{0.1}
		\begin{align}
			& u_t = \Delta\big( u \gamma(v)\big)- u f(u), \qquad &(t,x) &\in  (0,\infty) \times\Omega, \label{35}\\
			&  v-\Delta v = u ,  \qquad &(t,x) &\in (0,\infty) \times\Omega,\label{36}\\
			& \nabla\big( u\gamma(v)\big)\cdot \mathbf{n} = \nabla v\cdot \mathbf{n} = 0, \qquad &(t,x) &\in (0,\infty) \times\partial\Omega, \label{37}\\
			& u(0) = u^{in}, \qquad &x &\in\Omega. \label{38}
		\end{align}
	\end{subequations}
	We assume that $u^{in}$ satisfies the following conditions
	\begin{equation}\label{uin}
		u^{in}\in W^{1,\infty}( \Omega ),~~~~u^{in}\geq0~,~~~~u^{in} \not \equiv 0\;\;\text{in}~\Omega,
	\end{equation}
	and for $\gamma(\cdot)$, we require that
	\begin{equation}\label{gamma}
		\gamma(\cdot)\in C^3([0,\infty)),~~~~\gamma(\cdot)>0,~~~~  \text{on}~[0,\infty).
	\end{equation}
	Besides, we assume that
	\begin{equation}\label{f1}
		f(\cdot)\in C^{1}([0,\infty)),~~~~	\lim_{s\rightarrow \infty}f(s)=\infty,
	\end{equation}
	and
	\begin{equation}\label{f2}
		\limsup\limits_{s\rightarrow \infty}\frac{f(s)}{\log s}< \infty.
	\end{equation}
	A typical example fulfilling assumptions \eqref{f1}-\eqref{f2} is that $f(s)=\lambda \log^\alpha(1+s)-\mu$, where $\lambda>0$, $\mu \in \mathbb{R}$ and $0<\alpha\leq 1$.
	\\
	\par The main result of the present work on  global boundedness of solutions is stated as follows.
	\begin{theorem}\label{thm1}
		Assume $\Omega \subset \mathbb{R}^N$ with $N\leq3$. Suppose that $f(\cdot)$ satisfies the conditions \eqref{f1}-\eqref{f2}, and that $ \gamma(\cdot)$  satisfies   \eqref{gamma}. For any given initial datum $u^{in}$ satisfying \eqref{uin}, problem \eqref{0.1} has a unique global non-negative classical solution  $(u,v)\in \left( C^0([0,\infty)\times \bar{\Omega})\cap C^{1,2}( (0,\infty) \times \bar{\Omega}) \right)^2$, which is uniformly-in-time bounded. More precisely, there is a  constant $C>0$ depending only on $u^{in}$, $\gamma$, $f$, $N$, and $\Omega$ such that 
		\begin{equation*}
			\|u(t,\cdot)\|_{L^\infty(\Omega)}+\|v(t,\cdot)\|_{L^\infty(\Omega)}\leq C,~~~~~~\text{for all}\;\;t>0.
		\end{equation*}
	\end{theorem}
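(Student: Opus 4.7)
My plan is a four-stage bootstrap on the maximal existence interval $[0,T_{\max})$: a basic mass-type estimate for $u$, an $L^\infty$ bound for $v$ via a comparison argument, an entropy-like identity exploiting the $L^\infty$ bound on $v$, and finally an $L^\infty$ bound on $u$ through an $L^p$-to-$L^\infty$ passage. Local existence of a classical solution together with nonnegativity of $u,v$ follows from standard quasilinear parabolic theory since $\gamma>0$ on $[0,\infty)$, so the entire content of the theorem lies in deriving uniform a priori bounds and invoking the usual extensibility criterion.

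The starting point is easy: integrating \eqref{35} over $\Omega$ against the no-flux condition \eqref{37} gives $\tfrac{d}{dt}\!\int_\Omega u=-\!\int_\Omega uf(u)$. Since $f$ is continuous with $f(s)\to\infty$, the elementary pointwise bound $sf(s)\geq s-C$ holds for some $C$ and every $s\geq 0$, hence $\tfrac{d}{dt}\!\int u+\int u\leq C|\Omega|$. This yields a time-uniform $L^1$ bound on $u(t)$, and consequently on $v=(I-\Delta)^{-1}u$, together with the space-time integrability $\int_0^T\!\!\int_\Omega uf(u)\leq\|u^{in}\|_{L^1}$ for every $T$. Standard elliptic regularity for \eqref{36} with Neumann data transfers any $L^p$ estimate on $u$ into $W^{2,p}$ control on $v$; in dimension $N\leq3$, an $L^p$ bound on $u$ with $p>N/2$ already implies $\|v\|_{L^\infty}\leq C$ via Sobolev embedding.

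Producing such an $L^p$ bound, or equivalently bounding $v$ directly, is the main obstruction and the hard part of the proof. Because $\gamma$ is allowed to be nonmonotone, the natural entropy identity of the next step contains indefinite-sign contributions proportional to $\gamma'(v)$ and $\gamma''(v)$ that the slightly superlinear degradation $uf(u)\sim u\log u$ is too weak to absorb, so the $L^\infty$ bound on $v$ must be extracted independently rather than as a by-product. The idea for the comparison argument is to integrate \eqref{35} in time to eliminate $u_t$ and then couple the resulting identity with \eqref{36}: setting $W(t,x):=\int_0^t u(s,x)\gamma(v(s,x))\,ds$, the auxiliary function $\Phi:=v+W$ satisfies the Neumann elliptic relation
\[
-\Delta\Phi \;=\; u^{in}-v-\int_0^t u(s,\cdot)\,f(u(s,\cdot))\,ds, \qquad \nabla\Phi\cdot\mathbf{n}\big|_{\partial\Omega}=0,
\]
in which the time-integrated degradation acts as a pointwise stabilizing term. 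A carefully designed truncation or barrier, combined with a maximum-principle argument applied to $\Phi$ that exploits the sign of $uf(u)$ beyond a threshold, should then yield the desired pointwise estimate $\|v(t,\cdot)\|_{L^\infty}\leq C$. The delicate point is that $W$ is a priori unbounded in $t$, so the comparison must draw boundedness of $v$ from the degradation in a strictly pointwise rather than merely integrated fashion.

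Once $\|v\|_{L^\infty}\leq C$ is in hand, $\gamma,\gamma',\gamma''$ are all bounded along the solution and the remainder is largely mechanical. I would compute $\tfrac{d}{dt}\!\int u\log u$, integrate by parts twice (using $\nabla u\cdot\mathbf{n}=0$ derived from \eqref{37} and $\gamma>0$), and apply $\Delta v=v-u$ to obtain
\[
\frac{d}{dt}\!\int u\log u+\int\frac{\gamma(v)}{u}|\nabla u|^2+\int(\log u+1)\,uf(u)=\int u\gamma''(v)|\nabla v|^2+\int uv\,\gamma'(v)-\int u^2\gamma'(v).
\]
The slightly superlinear degradation contributes $\gtrsim\int u(\log u)^2$ on the left, the diffusion contributes $\gtrsim\|\nabla\sqrt u\|_{L^2}^2$, and the right-hand side is at most quadratic in $u$ with bounded coefficients. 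For $N\leq3$ the Sobolev embedding $H^1\hookrightarrow L^6$ applied to $\sqrt u$ delivers $u\in L^3$, and a Gagliardo-Nirenberg interpolation against the uniform $L^1$ bound closes a Gronwall inequality for $\int u\log u$. A subsequent $L^p$-energy estimate for sufficiently large $p$, followed by Moser iteration (or equivalently a semigroup mild-formulation bootstrap exploiting the parabolic-elliptic structure), then upgrades the bound to the uniform $\|u(t)\|_{L^\infty}\leq C$ required by the theorem.
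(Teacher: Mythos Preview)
Your overall four-stage outline matches the paper's, but two of the key steps have real gaps.

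\textbf{The $L^\infty$ bound for $v$.} Your time-integrated elliptic identity for $\Phi=v+W$ is correct, but it cannot produce a \emph{uniform-in-time} bound on $v$: since $W(t,\cdot)$ grows without bound, any maximum-principle information on $\Phi$ is lost when you subtract $W$, and the right-hand side $u^{in}-v-\int_0^t uf(u)\,ds$ contains a time integral whose sign you do not control pointwise (only $sf(s)\ge -C_0$, so it may contribute $+C_0 t$). The paper instead applies $\mathcal A^{-1}=(I-\Delta)^{-1}$ to \eqref{35} and uses $v=\mathcal A^{-1}u$ to obtain the \emph{parabolic} key identity
\[
v_t+\gamma(v)u=\mathcal A^{-1}\big[\gamma(v)u-uf(u)\big].
\]
Adding $v=\mathcal A^{-1}u$ and using $u\le \tfrac{1}{a_1}uf(u)+\tfrac{b_1}{a_1}$ (from $f(s)\to\infty$) with a suitable $a_1$, one gets $v_t+v\le C$ when $\gamma$ is bounded; when $\gamma$ is unbounded a parabolic comparison with an ODE supersolution built from an auxiliary decomposition $\gamma=\gamma(s_*)+\gamma_i+\gamma_d$ is used. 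The presence of $v_t$ is exactly what converts a right-hand side that is merely bounded into a uniform-in-time $L^\infty$ bound for $v$; your elliptic route discards this structure.

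\textbf{Closing the entropy estimate and the 3D bootstrap.} Your plan to absorb $\int u^2$ directly into $\|\nabla\sqrt u\|_{L^2}^2$ via Gagliardo--Nirenberg fails in $N=3$: interpolation gives $\int u^2\lesssim \|\sqrt u\|_{H^1}^3$, which is superlinear in the dissipation and cannot be absorbed (and the degradation term gives only $\int uf(u)\log u$, not $\int u(\log u)^2$, since \eqref{f1} imposes no lower growth rate on $f$). The paper instead first derives, from the key identity again, a uniform $H^1$ bound on $v$ and the \emph{space-time} estimates $\int_t^{t+\tau}\!\int_\Omega u^2\le C$ and $\int_t^{t+\tau}\!\int_\Omega|\nabla v|^4\le C$, then closes $\int u\log u$ by the uniform Gronwall lemma. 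Finally, the passage to $L^\infty$ in 3D is not routine Moser iteration: the $L^q$ energy inequality for $u$ needs $\nabla v\in L^r$ with $r>3$, which the paper extracts by proving H\"older continuity of $v$ through De~Giorgi-type local energy estimates (using $\|uf(u)\|_{L^1}\le C$ from the entropy bound and \eqref{f2}) and then applying analytic-semigroup estimates to the key identity viewed as a nonautonomous parabolic equation for $v$. Your sketch does not supply a mechanism for either of these.
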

	\par Complementing $\gamma$ with additional properties, more precisely, if $\gamma$ is non-decreasing and concave, we have  explicit upper bound estimates for $u$ in any dimensional spaces $N \geq 1$. Note we only need assumption \eqref{f1} on $f$ now.
	\begin{theorem}\label{thm2}
		Let $N \geq 1$.	Assume that $f$ satisfies \eqref{f1}, and $u^{in}$ satisfies \eqref{uin}.	If  $\gamma$ satisfies \eqref{gamma},  and moreover
		\begin{equation}\label{gamma2}
			\gamma' \geq 0,~~\gamma''\leq 0~~~~\text{on~~~~}[0, \infty),
		\end{equation}
		then, for any given initial datum $u^{in}$ satisfying \eqref{uin}, problem \eqref{0.1} has a unique global non-negative classical solution  $(u,v)\in \left( C^0([0,\infty)\times \bar{\Omega})\cap C^{1,2}( (0,\infty) \times \bar{\Omega}) \right)^2$, which is uniformly-in-time bounded. Moreover,
		\begin{equation*}
			\|u(t,\cdot )\|_{L^\infty(\Omega)}\leq \max \{ \|u^{in}\|_{L^\infty(\Omega)},\,\beta_1\}, ~~~~~ \text{for any~} t>0,
		\end{equation*}
		where   $\beta_1$ is a  non-negative constant depending only on $f$ given in Remark \ref{beta} below.
	\end{theorem}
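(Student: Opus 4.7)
The plan is to establish a uniform $L^\infty$ bound on $u$ over the maximal existence interval via a pointwise comparison at the maximum of $u(t,\cdot)$, and then to invoke the standard extensibility criterion for classical solutions. Local-in-time well-posedness of a non-negative classical pair $(u,v)$ on some $[0,T_{\max})$ is granted by the parabolic-elliptic local theory (via $v=(I-\Delta)^{-1}u$ with Neumann data, which reduces the system to a quasilinear equation in $u$ alone), so the essential task is to produce the a priori estimate stated in the theorem. The argument below uses only \eqref{f1} and never appeals to \eqref{f2}, which is why the hypothesis on $f$ is relaxed compared with Theorem \ref{thm1}.

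The first ingredient is the elliptic comparison
\begin{equation*}
\|v(t,\cdot)\|_{L^\infty(\Omega)} \leq \|u(t,\cdot)\|_{L^\infty(\Omega)}, \qquad t\in[0,T_{\max}).
\end{equation*}
Setting $K(t):=\|u(t,\cdot)\|_{L^\infty(\Omega)}$, I would test \eqref{36} against $(v-K)_+$ and use \eqref{37} to discard the boundary term, obtaining
\begin{equation*}
\int_\Omega |\nabla(v-K)_+|^2\,\rd x + \int_\Omega (v-K)_+^2\,\rd x = \int_\Omega (u-K)(v-K)_+\,\rd x \leq 0,
\end{equation*}
which forces $(v-K)_+\equiv 0$. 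The non-negativity $v\geq 0$ follows analogously.

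The main step is to introduce $M(t):=\|u(t,\cdot)\|_{L^\infty(\Omega)}$, which is Lipschitz thanks to the $C^{1,2}$-regularity of $u$, and to derive the scalar differential inequality
\begin{equation*}
M'(t) \leq -M(t)\,f(M(t)) \qquad \text{for a.e. } t\in(0,T_{\max}).
\end{equation*}
To this end I would expand $\Delta(u\gamma(v))$ and substitute $\Delta v = v-u$ from \eqref{36} to rewrite \eqref{35} as
\begin{equation*}
u_t = \gamma(v)\Delta u + 2\gamma'(v)\nabla u\cdot\nabla v + u\gamma''(v)|\nabla v|^2 + u\gamma'(v)(v-u) - uf(u),
\end{equation*}
and evaluate at a maximiser $x_0(t)$ of $u(t,\cdot)$. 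There $\nabla u(x_0)=0$ and $\Delta u(x_0)\leq 0$; in the boundary case one first deduces $\nabla u\cdot\mathbf{n}=0$ from \eqref{37} combined with $\gamma>0$, and then a Taylor expansion in the inward normal direction at $x_0$ yields $\partial_n^2 u(x_0)\leq 0$. By \eqref{gamma2} the terms containing $\gamma(v)\Delta u$ and $u\gamma''(v)|\nabla v|^2$ are non-positive, and since $v(x_0)\leq M(t)=u(x_0)$ by the previous step while $\gamma'(v)\geq 0$, the cross term $u\gamma'(v)(v-u)$ is non-positive too. Applying Danskin's envelope theorem produces the announced inequality for $M'$.

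Choosing any $\beta_1\geq 0$ for which $f\geq 0$ on $[\beta_1,\infty)$ — possible by \eqref{f1} and matching the constant supplied in Remark \ref{beta} — an elementary scalar ODE comparison gives $M(t)\leq \max\{\|u^{in}\|_{L^\infty(\Omega)},\beta_1\}$ for all $t\in[0,T_{\max})$; combined with the elliptic $L^\infty$ control on $v$, the extensibility criterion forces $T_{\max}=\infty$ and yields the claimed estimate. The most delicate technical point will be the rigorous treatment of the boundary case in the maximum-point argument (preserving $\Delta u(x_0)\leq 0$ despite the loss of interior information) together with the application of Danskin's theorem to the non-smooth function $M(t)$; conceptually, what drives the whole proof is the realisation that the structural conditions $\gamma'\geq 0$ and $\gamma''\leq 0$ conspire with the elliptic comparison $v\leq u$ to force $u\gamma'(v)(v-u)$ into a favourable sign, isolating $-uf(u)$ as the sole driver of the resulting ODE.
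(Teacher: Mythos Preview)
Your approach is correct and reaches the stated bound, but the mechanics differ from the paper's. You evaluate the expanded equation at a spatial maximiser of $u(t,\cdot)$ and then run an envelope/Danskin argument on $M(t)=\|u(t,\cdot)\|_{L^\infty}$, obtaining the clean scalar inequality $M'\le -Mf(M)$. The paper instead keeps the inequality in pointwise form on all of $\Omega$: after bounding $u\gamma''(v)|\nabla v|^2 + u\gamma'(v)(v-u)\le u\gamma'(0)(\|u\|_{L^\infty}-u)$ (using both $\gamma''\le 0$ and the concavity consequence $\gamma'(v)\le\gamma'(0)$) and replacing $u$ by $uf(u)+\beta_1$ via Remark~\ref{beta}, it compares $u$ with the solution $U$ of the scalar ODE
\[
U'+U=U\gamma'(0)\big(\|u\|_{L^\infty}-U\big)+\beta_1,\qquad U(0)=\|u^{in}\|_{L^\infty},
\]
through the classical parabolic comparison principle; since then $\|u\|_{L^\infty}\le U$, the nonlinear term on the right becomes non-positive and $U'+U\le\beta_1$ yields the bound. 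Your route is more direct and produces a sharper differential inequality, but it needs the careful boundary-maximum analysis and a rigorous envelope argument for the merely Lipschitz function $M$, both of which you flag. The paper's route sidesteps these technicalities by invoking an off-the-shelf comparison principle, at the price of carrying the auxiliary term $u\gamma'(0)(\|u\|_{L^\infty}-u)$ through one extra step. The underlying structural insight---that $\gamma'\ge 0$, $\gamma''\le 0$ and the elliptic estimate $v\le\|u\|_{L^\infty}$ together force every cross term to have a favourable sign---is identical in both arguments.
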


	One key of our proof  lies in deriving an upper bound for the second component $v$ under a rather general assumption \eqref{gamma} on $\gamma$. According to  the different asymptotic behavior of $\gamma$ near infinity,  two distinct comparison approaches proposed recently in \cite{FuJi2020} and \cite{JiLa2023}, respectively, are carefully modified to tackle the nonlinear difficulty brought by the new super-linear source term. Note that uniform upper boundedness of $v$ is proved with any super-linear degradation in any dimension $N\geq1$ in the current contribution. With the uniform boundedness of $v$ at hand, we employ standard  energy method to show the uniform boundedness of $u$ when $N=2$. Moreover, we derive an entropy-like estimate involving $\int_\Omega u\log u\;\rd x$ in any dimensional spaces $N\geq 1$, which also plays a crucial rule in the proof, since it provides us an $L^1$-boundedness of  $uf(u)$ due to our assumption \eqref{f2}. Such an estimate then helps us in the proof of  H\"older continuity of $v$, and further application of semigroup theory to establish the uniform boundedness of $u$ when $N=3$.

	It is worth mentioning that, compared with previous research \cite{FuJi2020,JKW2018,LyWa2023}, we significantly relax the conditions on $\gamma$ by eliminating the requirement of non-increasing  monotonicity. Indeed, we only need $\gamma$ to satisfy \eqref{gamma}. In addition,  the external source $uf(u)$ is permitted to be of any slightly super-linear growth order not faster than $s\log s$, thus implying a minimal requirement on growth order of the source term that can prevent blow-up in system \eqref{0.1} when $N\leq 3$. Lastly, with additional monotonicity and convexity properties on $\gamma$, we prove uniform boundedness of classical solutions as well as an explicit upper bound for $u$ in any dimensions $N\geq 1$.
	
	The remaining structure of this paper is as follows. In Section 2, we provide some preliminary results and introduce several useful lemmas that will be utilized in the subsequent proof. Section 3 focuses on derivation of  the uniform boundedness of $v$. In Sections 4 and 5, we prove the uniform boundedness of $u$ in two-dimensional and three-dimensional settings, respectively. Finally, with additional properties indicated in Theorem \ref{thm2}, we derive explicit $L^\infty$-estimates of $u$ in any dimensions in Section 6.
	
	\section{Preliminaries}
	In this section, we will introduce some previously established conclusions that will be utilized later. We start with the existence of local classical solutions to problem \eqref{0.1}, which can be established through the Schauder fixed-point theorem and the regularity theory for elliptic/parabolic equations. Since a quite similar proof can be found in \cite[Lemma 3.1]{AhnYoon2019}, we omit the detail here.
	\begin{theorem} \label{local}
		Let $\Omega \subset \mathbb{R}^N$ with $N\geq1$ be a smooth bounded domain. Assume that  $f(\cdot)$ satisfies condition \eqref{f1}, and $\gamma(\cdot)$ satisfies    \eqref{gamma}. Then for any given initial datum $u^{in}$ satisfying \eqref{uin},  problem \eqref{0.1} has a unique non-negative classical  solution  $(u,v)\in \left( C^0\left( [0,T_{\max} ) \times \bar{\Omega} \right) \cap C^{1,2}((0,T_{\max})\times  \bar{\Omega})   \right)^2,$ defined on a maximal time interval $[0,T_{\max})$ with $T_{\max}\in(0,\infty]$. If  $T_{\max} < \infty,$ then
		$$\lim\limits_{t \rightarrow T_{\mathrm{ma              x}}} \|u(t,\cdot)\|_{L^\infty(\Omega)}=\infty.$$
	\end{theorem}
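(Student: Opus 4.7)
The plan is to apply Schauder's fixed-point theorem to a decoupling iteration that treats $v$ as a subordinate variable determined elliptically from $u$, then combine the resulting local existence with a bootstrap for regularity, a Gronwall-type uniqueness argument, the maximum principle for non-negativity, and a standard continuation procedure for the blow-up criterion. Fix $\alpha\in(0,1)$, $R>\|u^{in}\|_{L^\infty(\Omega)}$, and a small $T>0$ to be chosen. On the convex set
$$K_T:=\Bigl\{\tilde u\in C^{\alpha/2,\alpha}([0,T]\times\bar\Omega):\;0\le\tilde u\le R,\;[\tilde u]_{C^{\alpha/2,\alpha}}\le L,\;\tilde u(0,\cdot)=u^{in}\Bigr\},$$
define a map $\Phi$ by first solving the elliptic problem $v-\Delta v=\tilde u$ with homogeneous Neumann data, so that elliptic Schauder theory yields $v\in C^{\alpha/2}([0,T];C^{2+\alpha}(\bar\Omega))$, and then solving the \emph{linear} parabolic equation
\begin{equation*}
u_t=\gamma(v)\Delta u+2\gamma'(v)\nabla v\cdot\nabla u+\bigl(\gamma''(v)|\nabla v|^2+\gamma'(v)\Delta v-f(\tilde u)\bigr)u
\end{equation*}
with initial datum $u^{in}$ and the boundary condition $\nabla u\cdot\mathbf n=0$, which is equivalent to \eqref{37} since $\nabla v\cdot\mathbf n=0$ and $\gamma(v)>0$ by \eqref{gamma}. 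Parabolic Schauder theory yields a unique classical solution $u$, and we set $\Phi(\tilde u):=u$.

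For $T$ sufficiently small, depending only on the data, three properties then hold. First, $\Phi(K_T)\subset K_T$: the bound $0\le u\le R$ follows from the maximum principle applied to the linearized equation (whose zero-order coefficient is bounded, because $v\in C^{2+\alpha}$ spatially and $f$ is locally bounded by \eqref{f1}), and $[u]_{C^{\alpha/2,\alpha}}\le L$ follows from parabolic Schauder estimates on short intervals. Second, $\Phi$ is continuous on $K_T$ in the weaker topology $C^{\alpha'/2,\alpha'}$ with any $\alpha'<\alpha$, since both the elliptic and linear parabolic solution operators depend continuously on their data. Third, $\Phi(K_T)$ is precompact in this weaker topology by Arzel\`a--Ascoli. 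Schauder's theorem therefore produces a fixed point, which together with the corresponding $v$ is a classical solution of \eqref{0.1} on $[0,T]$.

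Regularity is then bootstrapped by feeding the solution back into $v-\Delta v=u$ and re-applying parabolic Schauder estimates, giving the claimed $C^{1,2}$ regularity away from $t=0$. Uniqueness is handled by a Gronwall argument: for two solutions $(u_i,v_i)$ with the same initial data, writing $w=u_1-u_2$ in divergence form, testing against $w$, and using local Lipschitz estimates on $\gamma$ and $f$ together with the already established boundedness yields $\tfrac{d}{dt}\|w\|_{L^2}^2\le C\|w\|_{L^2}^2$, whence $w\equiv 0$. The blow-up criterion follows by a standard continuation argument: the local existence time produced above depends only on an $L^\infty$-bound of the starting datum (via elliptic regularity for $v$), so if $\|u(t,\cdot)\|_{L^\infty}$ stayed bounded on $[0,T_{\max})$ with $T_{\max}<\infty$, the solution could be restarted at some $t_0$ close to $T_{\max}$ and extended past $T_{\max}$, contradicting maximality.

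The central technical difficulty is the quasilinear diffusion $\Delta(u\gamma(v))$, whose linearization produces the zero-order coefficient $\gamma''(v)|\nabla v|^2+\gamma'(v)\Delta v-f(\tilde u)$; boundedness of this term, which is needed for the parabolic Schauder theory, forces us to carry a Hölder bound in $K_T$ rather than working merely in $L^\infty$, and to verify that this bound is reproduced by $\Phi$ on short time scales. The new lower-order term $uf(\tilde u)$ does not complicate this step because $f$ is locally bounded by \eqref{f1}, so once the Hölder bookkeeping is in place the argument parallels that of \cite[Lemma~3.1]{AhnYoon2019}.
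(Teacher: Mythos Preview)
Your proposal is correct and follows essentially the same approach indicated by the paper, which does not give a proof at all but simply states that local existence ``can be established through the Schauder fixed-point theorem and the regularity theory for elliptic/parabolic equations'' and refers to \cite[Lemma~3.1]{AhnYoon2019} for details. Your sketch fleshes out exactly this route---fixed point on a H\"older ball, elliptic Schauder for $v$, linear parabolic Schauder for $u$, bootstrap, Gronwall uniqueness, and continuation---and you even cite the same reference, so there is nothing to compare.
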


	\par Next, we recall the following lemma which is provided in \cite{AhnYoon2019,Brezis1973}, regarding estimations for the solution of  Helmholtz equations.  Let $l_+=\max \{ l,0\}$. Then, we have
	\begin{lemma}
		Let $\Omega $ be a smooth bounded domain in $\mathbb{R}^N,N\geq 1,$ and $f\in L^1(\Omega).$ For any $1\leq q< \frac{N}{(N-2)_+},$ there exists a positive constant $C=C(N,q,\Omega)$ such that the solution $z \in W^{1,1}(\Omega)$ to 
		\begin{equation*}\label{0}
			\left\{
			\begin{aligned}
				-\Delta z+z=f,~~~~&x\in \Omega, \\
				\nabla z\cdot \mathbf{n}=0,~~~~& x\in \partial \Omega, 
			\end{aligned}
			\right.
		\end{equation*}
		satisfies
		\begin{equation*}
			\|z\|_{L^q(\Omega)}\leq C(N,q,\Omega)\|f\|_{L^1(\Omega)}.
		\end{equation*}
	\end{lemma}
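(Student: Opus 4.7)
The plan is to prove this $L^q$ estimate by a standard duality argument keyed on the observation that the Hölder conjugate $q'$ of any admissible exponent $q\in[1,N/(N-2)_+)$ satisfies $q'>N/2$. This is precisely the threshold above which $L^p$-elliptic regularity for the Neumann Helmholtz problem, combined with Sobolev embedding, delivers an $L^\infty$ control on the dual solution.

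First I would set up the adjoint problem: for each test function $g\in L^{q'}(\Omega)$, let $w$ solve $-\Delta w+w=g$ in $\Omega$ with $\nabla w\cdot\mathbf{n}=0$ on $\partial\Omega$. The Agmon--Douglis--Nirenberg theory, applicable because $\Omega$ is smooth and bounded, provides $w\in W^{2,q'}(\Omega)$ with $\|w\|_{W^{2,q'}(\Omega)}\leq C\|g\|_{L^{q'}(\Omega)}$; the threshold $q'>N/2$ then triggers the embedding $W^{2,q'}(\Omega)\hookrightarrow L^\infty(\Omega)$, yielding $\|w\|_{L^\infty(\Omega)}\leq C(N,q,\Omega)\|g\|_{L^{q'}(\Omega)}$.

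Second, I would approximate $f$ in $L^1(\Omega)$ by a sequence $\{f_n\}\subset C^\infty(\bar\Omega)$ and let $z_n$ denote the associated classical solutions. Integration by parts twice—with both Neumann conditions killing the boundary terms—gives
\begin{equation*}
\int_\Omega z_n g\,\rd x=\int_\Omega z_n(-\Delta w+w)\,\rd x=\int_\Omega(-\Delta z_n+z_n)w\,\rd x=\int_\Omega f_n w\,\rd x,
\end{equation*}
so that $\left|\int_\Omega z_n g\,\rd x\right|\leq\|w\|_{L^\infty(\Omega)}\|f_n\|_{L^1(\Omega)}\leq C\|g\|_{L^{q'}(\Omega)}\|f_n\|_{L^1(\Omega)}$. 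Taking the supremum over $g$ in the unit ball of $L^{q'}(\Omega)$ yields $\|z_n\|_{L^q(\Omega)}\leq C\|f_n\|_{L^1(\Omega)}$; applying the same bound to $z_n-z_m$ shows that $\{z_n\}$ is Cauchy in $L^q(\Omega)$, with a limit inheriting the announced estimate.

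The main obstacle is identifying this limit with the very-weak $W^{1,1}$ solution named in the statement, since $W^{1,1}$ test functions are not regular enough to directly justify the integration by parts against an arbitrary $w\in W^{2,q'}$. The standard resolution is to invoke the Brezis--Strauss / Boccardo--Gallou\"et theory for elliptic equations with $L^1$ data on smooth bounded domains, which simultaneously produces the $W^{1,1}$ regularity and its uniqueness; uniqueness then forces the limit from the approximation to agree with the solution $z$ in the statement, concluding the proof.
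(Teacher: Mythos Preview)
Your duality argument is correct and is indeed the classical route to this estimate. The paper, however, does not supply its own proof of this lemma: it is introduced with the phrase ``we recall the following lemma which is provided in \cite{AhnYoon2019,Brezis1973}'' and is used as a black box thereafter. So there is no in-paper proof to compare against; your write-up simply fills in what the paper outsources to the references.

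One small point worth tightening: at the endpoint $q=1$ you have $q'=\infty$, and the Agmon--Douglis--Nirenberg $W^{2,q'}$ estimate is not available for $q'=\infty$. This is harmless, since for $g\in L^\infty(\Omega)$ you may solve the dual problem in $W^{2,p}(\Omega)$ for any finite $p>N/2$ and still obtain $\|w\|_{L^\infty(\Omega)}\le C\|g\|_{L^p(\Omega)}\le C|\Omega|^{1/p}\|g\|_{L^\infty(\Omega)}$, after which the rest of your argument goes through unchanged. Apart from this cosmetic adjustment, the approximation step, the self-adjointness that makes the integration by parts clean, and the appeal to the Brezis--Strauss uniqueness theory to identify the limit with the stated $W^{1,1}$ solution are all standard and sound.
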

	
	Then, we introduce the following uniform Gronwall's inequality \cite[Chapter III, Lemma 1.1]{Temam2012}.
	\begin{lemma}\label{Gronwall}
		Let $g,h,y$ be three positive locally integrable functions on $(t_0,+\infty)$ such that $y'$ is locally integrable on $(t_0,+\infty)$ and the following inequalities are satisfied:
		\begin{equation*}
			\frac{\rd y}{\rd t}\leq gy+h, \;\ \forall t\geq t_0,
		\end{equation*}
		\begin{equation*}
			\int_t^{t+r}g(s)\;\rd s\leq a_1,\quad \int_t^{t+r}h(s)\;\rd s \leq a_2,\quad  \int_t^{t+r}y(s)\;\rd s \leq a_3,\;\ \forall t \geq t_0,
		\end{equation*}
		where $r$, $a_i (i=1,2,3)$ are positive constants. Then
		\begin{equation*}
			y(t+r)\leq \left(\frac{a_3}{r}+a_2\right)e^{a_1},\;\ \forall t\geq t_0.
		\end{equation*}
	\end{lemma}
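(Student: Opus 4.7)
The plan is to combine the classical scalar Gronwall inequality with an averaging step that crucially exploits the third hypothesis $\int_t^{t+r} y(s)\,\rd s \leq a_3$. The key observation is that, in the usual Gronwall bound, the \emph{base point} can be chosen freely inside $[t,t+r]$; averaging in that base point converts pointwise values of $y$, which we do not directly control, into the time-integral of $y$, which we do.

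Concretely, I first apply the standard integrating-factor argument to $y' \leq g y + h$ between an arbitrary base point $s \in [t,t+r]$ and the final time $t+r$. In the absolutely-continuous-function formulation (which is what the local integrability hypotheses on $y$ and $y'$ support), this produces
\begin{equation*}
y(t+r) \leq y(s)\exp\left(\int_s^{t+r} g(\tau)\,\rd\tau\right) + \int_s^{t+r} h(\sigma)\exp\left(\int_\sigma^{t+r} g(\tau)\,\rd\tau\right)\rd\sigma.
\end{equation*}
Since every inner integration interval is a subinterval of $[t,t+r]$, each exponent is bounded by $\int_t^{t+r} g \leq a_1$, so both exponential factors are bounded by $e^{a_1}$. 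Combined with $\int_t^{t+r} h \leq a_2$, this yields the pointwise-in-$s$ estimate $y(t+r) \leq e^{a_1}\bigl(y(s) + a_2\bigr)$ for almost every $s \in [t,t+r]$.

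Finally, I will average this inequality in $s$: integrating both sides over $s \in [t,t+r]$ and dividing by $r$ turns the $y(s)$-term into $\frac{1}{r}\int_t^{t+r} y(s)\,\rd s \leq a_3/r$, producing exactly
\begin{equation*}
y(t+r) \leq e^{a_1}\left(\frac{a_3}{r} + a_2\right).
\end{equation*}
I do not anticipate a substantial obstacle; the only point requiring real care is the passage from the differential inequality to its integral form when $y$ and $y'$ are merely locally integrable. This is handled by treating $\tau \mapsto y(\tau)\exp\bigl(-\int_s^\tau g\bigr)$ as an absolutely continuous function and applying the chain rule in that class, which is a standard real-analysis reduction rather than a genuine difficulty. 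The conceptual content is thus the averaging trick: without the third hypothesis one would be stuck with $y(s)$ on the right-hand side, and it is precisely the integral bound $a_3$ that lets that term be absorbed.
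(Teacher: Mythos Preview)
Your argument is correct and is precisely the standard proof of the uniform Gronwall inequality: a pointwise Gronwall estimate from an arbitrary base point $s\in[t,t+r]$, followed by averaging in $s$ to exploit the bound on $\int_t^{t+r} y$. The paper does not supply its own proof of this lemma but merely cites \cite[Chapter III, Lemma 1.1]{Temam2012}; your proof is essentially the one found there.
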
 
	
	The following lemma provides lower estimates for $s^\alpha f(s)\log^\beta s$ with any $\alpha>0$ and  $\beta \geq 0$, as well as upper estimates for $sf(s)$, for all $s\geq0$. 
	\begin{lemma}\label{uf(u)}
		Suppose that $f$ satisfies the assumptions \eqref{f1}. Assume that $\alpha>0$ and $\beta\geq0$. Then, for any $a_1>0$, there exists a constant $b_1\geq 0$ depending on $a_1$, $f$, $\alpha$ and $\beta$ such that 
		\begin{equation}\label{28}
			s^\alpha f(s)\log^\beta s\geq a_1s^\alpha-b_1,~~~~\text{for any } s\geq 0.
		\end{equation}
		Furthermore, if $f$ satisfies \eqref{f2}, then there exist constants $a_2 > 0$ and $b_2\geq0$ such that
		\begin{equation}\label{29}
			sf(s)\leq a_2s\log s+b_2,~~~~\text{for any }s\geq 0.
		\end{equation}
	\end{lemma}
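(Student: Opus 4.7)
The plan is to prove each inequality by splitting $[0,\infty)$ into a compact interval, where continuity of $f$ lets me bound the relevant quantities by constants, and an asymptotic tail, where the hypotheses \eqref{f1} or \eqref{f2} apply directly. In both cases, only the tail uses any nontrivial property of $f$; the compact part is absorbed into the additive constants $b_1$ or $b_2$.

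For \eqref{28}, I would first invoke \eqref{f1}: given any $a_1 > 0$, there exists $s_0 \geq e$ such that $f(s) \geq a_1$ for all $s \geq s_0$. Enforcing $s_0 \geq e$ guarantees $\log^\beta s \geq 1$ for every $s \geq s_0$ and every $\beta \geq 0$, so
\[
s^\alpha f(s) \log^\beta s \;\geq\; a_1 s^\alpha \qquad \text{for all } s \geq s_0.
\]
On the compact set $[0, s_0]$, the function $a_1 s^\alpha - s^\alpha f(s) \log^\beta s$ is continuous (with the convention that $s^\alpha \log^\beta s$ extends continuously to $s=0$ since $\alpha > 0$) and hence bounded above by some $b_1 \geq 0$. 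Combining the two estimates yields \eqref{28} on the full half-line.

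For \eqref{29}, I would extract from \eqref{f2} constants $a_2 > 0$ and $s_1 \geq 1$ such that $f(s) \leq a_2 \log s$ whenever $s \geq s_1$, and multiply by $s \geq 0$ to get $s f(s) \leq a_2 s \log s$ on $[s_1, \infty)$. On $[0, s_1]$, the function $s f(s) - a_2 s \log s$ is continuous (using $0 \log 0 = 0$), so its supremum there delivers a constant $b_2 \geq 0$ for which \eqref{29} holds globally (the nonpositivity of $s \log s$ on $(0,1)$ is harmless, as any shortfall is absorbed into $b_2$).

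I do not expect a genuine obstacle: both parts reduce to combining an asymptotic growth estimate with the boundedness of a continuous function on a compact set. The only piece of technical care is in \eqref{28}, where the logarithmic factor must not spoil the lower bound on $f$; choosing the threshold $s_0 \geq e$ from the outset handles this uniformly in $\beta \geq 0$, after which the remaining contributions are safely bundled into $b_1$.
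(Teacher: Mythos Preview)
Your proposal is correct and follows essentially the same approach as the paper's proof: split $[0,\infty)$ into a tail where the asymptotic hypothesis on $f$ gives the desired inequality directly, and a compact interval where continuity supplies the additive constant. Your choice of $s_0 \geq e$ to ensure $\log^\beta s \geq 1$ is slightly more explicit than the paper's version, but the arguments are otherwise identical.
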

	
	\begin{proof}
		Since $f(\cdot)$ satisfies the assumption \eqref{f1}, we can infer that for any $a_1>0$, there exists $s_0>1$ such that for all $s\geq s_0$, $$ s^\alpha f(s)\log ^\beta s \geq a_1 s^\alpha.$$		
		On the other hand, note $$b_1 :=\max\limits_{s\in [0, s_0]} \left|a_1s^\alpha - s^\alpha f(s)\log ^\beta s \right |<\infty,$$ 
		due to the continuity of $f$ and $s^\alpha\log^\beta s$ on $[0,\infty)$.   Then, we have 
		$$s^\alpha f(s)\log ^\beta s \geq a_1s^\alpha-b_1$$ for all $ s\geq0 $.
		
		Similarly, if $f$ satisfies \eqref{f2}, for any $a_2>0$, there exists $s_1>1$, such that for all $s>s_1$,
		\begin{equation*}
			sf(s)\leq a_2 s\log s.
		\end{equation*}
		Furthermore, owing to the continuity of $f$ and $s\log s$ on $[0,\infty)$, we have
		\begin{equation}
			b_2:=\max_{s\in [0,s_0]}\left| sf(s)-a_2s\log s \right |<\infty.
		\end{equation}
		Then, we obtain \eqref{29}. This completes the proof.
	\end{proof}

	\begin{remark}\label{beta}
		If $\alpha=1$  and $\beta=0$, it follows from \eqref{28} that for any $a_1>0$, there exists a constant $b_1\geq 0$ depending on $a_1$ and $f$ such that 
		\begin{equation}\label{46}
			s\leq \frac{sf(s)}{a_1}+\frac{b_1}{a_1},\qquad \text{for any } s\geq 0.
		\end{equation}
		We  denote the corresponding constant $b_1$ in \eqref{46} by $\beta_1$ when $a_1=1$ throughout this paper. Thus, for any $s\geq 0$, there holds
		\begin{equation}\label{44}
			s\leq sf(s)+\beta_1.
		\end{equation}
	\end{remark}
	
	Next, define
	\begin{equation*}
		\begin{aligned}
			&D(\mathcal{A}):=\{~z\in H^2(\Omega)~:~\nabla z\cdot \textbf{n}=0~\text{on}~\partial \Omega~\},\\
			&\mathcal{A}[z]:=z-\Delta z,~~~~z\in D(\mathcal{A}).
		\end{aligned}
	\end{equation*}
	Here, $\Delta $ denotes the usual Laplace operator supplemented with homogeneous Neumann boundary conditions.
	
	We complete this section by introducing the following
	key identity firstly  uncovered in \cite{FuJi2020}, which can be easily obtained by taking $\mathcal{A}^{-1}$ on both sizes  of \eqref{35} and using the fact $v=\mathcal{A}^{-1}[u]$ due to \eqref{36}.
	\begin{lemma}
		The function $v$ satisfies the following key identity
		\begin{equation}\label{k-i}
			v_t+\gamma(v)u=\mathcal{A}^{-1}\left[ \gamma(v)u -uf(u)\right],\qquad\qquad(t,x) \in  (0,T_{\max})\times \Omega.
		\end{equation}
	\end{lemma}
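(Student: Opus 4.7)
The plan is to start from the already-available relation $v=\mathcal{A}^{-1}[u]$, which is immediate from \eqref{36} together with the Neumann boundary condition $\nabla v\cdot\mathbf{n}=0$ in \eqref{37}, so that $v(t,\cdot)\in D(\mathcal{A})$ for every $t\in(0,T_{\max})$. Differentiating in $t$ and using the fact that for a classical solution in $C^{1,2}$ the operators $\partial_t$ and $\mathcal{A}^{-1}$ commute (since $\mathcal{A}^{-1}$ is bounded linear on, say, $L^2$, and acts only in the spatial variable), one obtains $v_t=\mathcal{A}^{-1}[u_t]$.

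Next I would rewrite the PDE \eqref{35} in a form that is ready for $\mathcal{A}^{-1}$. Using $\mathcal{A}[w]=w-\Delta w$, i.e.\ $\Delta w=w-\mathcal{A}[w]$ for any $w\in D(\mathcal{A})$, and noting that $w:=u\gamma(v)$ satisfies $\nabla w\cdot\mathbf{n}=0$ by \eqref{37} so belongs to $D(\mathcal{A})$, equation \eqref{35} becomes
\begin{equation*}
u_t=u\gamma(v)-\mathcal{A}\bigl[u\gamma(v)\bigr]-uf(u).
\end{equation*}
Applying $\mathcal{A}^{-1}$ to both sides and using $v_t=\mathcal{A}^{-1}[u_t]$ yields
\begin{equation*}
v_t=\mathcal{A}^{-1}\bigl[u\gamma(v)\bigr]-u\gamma(v)-\mathcal{A}^{-1}\bigl[uf(u)\bigr]=\mathcal{A}^{-1}\bigl[u\gamma(v)-uf(u)\bigr]-\gamma(v)u,
\end{equation*}
which is exactly the identity \eqref{k-i}.

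There is no real obstacle here; the only point that needs a word of care is the interchange of $\partial_t$ and $\mathcal{A}^{-1}$ together with the verification that both $v$ and $u\gamma(v)$ lie in $D(\mathcal{A})$, but both facts are immediate from the classical regularity asserted in Theorem~\ref{local} and the Neumann conditions in \eqref{37}. Thus the proof reduces to the short algebraic manipulation above.
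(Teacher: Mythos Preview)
Your proof is correct and follows essentially the same route as the paper: apply $\mathcal{A}^{-1}$ to \eqref{35} and use $v=\mathcal{A}^{-1}[u]$ from \eqref{36}. You have simply made explicit the intermediate step $\Delta(u\gamma(v))=u\gamma(v)-\mathcal{A}[u\gamma(v)]$ and the domain/commutation checks that the paper leaves implicit.
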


	\section{Time-independent upper bounds for \(v\) }
	
	In this section, we prove the uniform boundedness of $v$ in any dimensions via  comparison approaches.  The main outcome of this section is stated as follows.
	\begin{proposition} \label{V}
		Let  $N\geq 1$. Suppose that $\gamma(\cdot)$ satisfies the assumption \eqref{gamma} and
		$f(\cdot)$ satisfies (\ref{f1}). Then there exists a constant $v^*>0$ depending only on $\gamma$, $f$, $\Omega$ and $u^{in}$ such that for all $t\in [0, T_{\max})$, there holds 
		\begin{equation} \label{rangev}
			\|v(t)\|_{L^\infty(\Omega)} \leq v^*.
		\end{equation}
	\end{proposition}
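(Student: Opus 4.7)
The plan is to leverage the key identity \eqref{k-i} together with the slight super-linearity of the degradation. As a preliminary, integrating \eqref{35} over $\Omega$ and using the boundary condition \eqref{37} yields $\frac{\mathrm{d}}{\mathrm{d}t}\int_\Omega u\,\mathrm{d}x = -\int_\Omega uf(u)\,\mathrm{d}x$, and the lower bound $uf(u)\geq u-\beta_1$ from \eqref{44} combined with Gr\"onwall provides a time-independent $L^1(\Omega)$-bound on $u$, hence on $v$ via \eqref{36}. This serves only as a base estimate.

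The next step is to rewrite \eqref{k-i} as a workable differential inequality for $v$. Using that $\mathcal{A}^{-1}$ is order-preserving, that $\mathcal{A}^{-1}[1]=1$ (since $-\Delta 1+1=1$), and that $\mathcal{A}^{-1}[u]=v$ by \eqref{36}, applying \eqref{44} inside the bracket on the right-hand side of \eqref{k-i} yields
\begin{equation*}
v_t + v + \gamma(v)u \;\leq\; \mathcal{A}^{-1}[\gamma(v)u] + \beta_1.
\end{equation*}
The whole game is now to absorb $\mathcal{A}^{-1}[\gamma(v)u]$ into the damping $v+\gamma(v)u$ on the left, and this is where the behavior of $\gamma$ near infinity forces a case split.

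In the non-degenerate regime $\liminf_{s\to\infty}\gamma(s)>0$ I would follow the comparison strategy of \cite{JiLa2023}: set $V(t):=\|v(t,\cdot)\|_{L^\infty(\Omega)}$ and choose a threshold $M(V)$ large enough that $f(s)\geq 2\max_{[0,V]}\gamma$ whenever $s\geq M(V)$. Splitting $\{u\leq M(V)\}$ against $\{u>M(V)\}$ shows $\gamma(v)u-uf(u)\leq \max_{[0,V]}\gamma\cdot M(V)$ pointwise, and since $\mathcal{A}^{-1}$ is order-preserving and fixes constants, the right-hand side of \eqref{k-i} inherits the same constant bound. Evaluating \eqref{k-i} at an interior maximum of $v$ (where $\Delta v\leq 0$, hence $u=v-\Delta v\geq v=V$) then reduces the problem to an autonomous scalar ODE inequality for $V(t)$, from which a uniform bound follows. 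In the complementary regime, where $\gamma(s)\to 0$ along a sequence $s\to\infty$, the apparent damping $\gamma(v)u$ on the left vanishes at large $v$ and the above ODE argument breaks down; here I would carefully adapt the comparison construction of \cite{FuJi2020}, which is built around a primitive of $1/\gamma$ and exploits the smallness of $\gamma(v)$ at large $v$ to reabsorb the right-hand side, modifying it so as to accommodate the additional $\mathcal{A}^{-1}[uf(u)]$ contribution produced by the new source.

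The main obstacle is clearly this second regime. There is no a priori pointwise $L^\infty$-bound on $\gamma(v)u$ available, since $u$ is known only in $L^1$ at this stage and $\gamma$ may be arbitrary, so any bound on $\mathcal{A}^{-1}[\gamma(v)u]$ must come through cancellations with the super-linear dissipation $\mathcal{A}^{-1}[uf(u)]$ also hidden on the right-hand side of \eqref{k-i}. Getting these two $\mathcal{A}^{-1}$-terms to compete in the correct quantitative manner, uniformly over the admissible shapes of $\gamma$, is the delicate heart of the argument and the key modification over \cite{FuJi2020,JiLa2023}, which treated either the source-free case or relied on monotonicity of $\gamma$.
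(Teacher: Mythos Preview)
Your dichotomy is along the wrong axis. You split on whether $\liminf_{s\to\infty}\gamma(s)>0$, but the paper splits on whether $\limsup_{s\to\infty}\gamma(s)<\infty$, i.e.\ on \emph{boundedness} of $\gamma$. These are not equivalent, and the mismatch creates a genuine gap in your Case~A. Take $\gamma(s)=s+1$ (so $\liminf=\infty>0$, squarely in your Case~A) and any $f$ satisfying only \eqref{f1}, say $f(s)=\log(2+s)$. Your threshold $M(V)$ must satisfy $f(M(V))\geq 2\max_{[0,V]}\gamma=2(V+1)$, hence $M(V)\sim e^{2V}$, and your pointwise bound on $\gamma(v)u-uf(u)$ is of order $(V+1)e^{2V}$. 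The damping you extract at the spatial maximum is only $\gamma(V)V=(V+1)V$, so your scalar inequality $V'\lesssim (V+1)e^{2V}-(V+1)V$ gives no bound whatsoever. The problem is structural: a pointwise bound on $\gamma(v)u-uf(u)$ in terms of $\|v\|_\infty$ cannot compete with the local damping $\gamma(v)u$ when $\gamma$ is unbounded, because the former sees $\max_{[0,V]}\gamma$ while the latter sees only $\gamma(V)$.

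The paper's treatment is quite different in both branches. For \emph{bounded} $\gamma$ the argument is much simpler than what you outline: keep $\mathcal{A}^{-1}[uf(u)]$ on the left of the key identity instead of throwing it away, write $\gamma(v)u+u\leq(\gamma^*+1)u$, and then use \eqref{46} with $a_1=\gamma^*+1$ (not merely $a_1=1$ as in \eqref{44}) to get $(\gamma^*+1)u\leq uf(u)+b_1$. The $\mathcal{A}^{-1}[uf(u)]$ terms cancel exactly, yielding $v_t+v+\gamma(v)u\leq C$ directly---no maximum-point argument, no primitive of $1/\gamma$, no ODE comparison. For \emph{unbounded} $\gamma$ the paper introduces a threshold $s_*\geq\max\{\|v^{in}\|_\infty,\beta_1\}$ at which $\gamma(s_*)=\max_{[0,s_*]}\gamma$, then decomposes $\gamma'=\gamma_i'+\gamma_d'$ beyond $s_*$ into positive and negative parts, and proves the pointwise inequality $u\gamma(v)\leq u[\gamma(s_*)+\gamma_i(\|v\|_\infty)]+\mathcal{A}[\Gamma_d(v)]$ with $\Gamma_d$ a primitive of $\gamma_d$. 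This is what replaces your failed pointwise bound: it trades the uncontrolled factor $\max_{[0,V]}\gamma$ for $\gamma(s_*)+\gamma_i(V)$, and the crucial cancellation comes from the \emph{monotone} piece $\gamma_i$ matching against $\gamma(V)$ at the ODE level. Neither the degeneracy of $\gamma$ nor a primitive of $1/\gamma$ plays any role.
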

	\begin{proof}	
		In view of the different asymptotic behavior of $\gamma$ near infinity, we divide our proof of Proposition \ref{V} into two cases  presented separately in Lemma \ref{lemmav} and Lemma \ref{lemmav2} as below.\end{proof}
	\subsection{Uniform upper bound for $v$ with bounded motility}
	To begin with, we consider the case when $\gamma$ is bounded near infinity and we have
	\begin{lemma}\label{lemmav}
		Let $N\geq 1$. Suppose that $\gamma(\cdot)$ satisfies assumption \eqref{gamma} and $\gamma$ is bounded near infinity, i.e., 
		\begin{equation}\label{bounded}
			\limsup_{s\rightarrow \infty} \gamma(s) <\infty.
		\end{equation}
		Assume that	$f(\cdot)$ satisfies (\ref{f1}). Then there exists a  constant $C>0$ depending only on $\gamma$ and $f$ such that for any $t\in [0, T_{\max})$,
		\begin{equation}\label{v}
			\|v(t)\|_{L^\infty(\Omega)} \leq \|v^{in}\|_{L^\infty(\Omega)}+C,
		\end{equation}where $v^{in}:=\mathcal{A}^{-1}[u^{in}].$
	\end{lemma}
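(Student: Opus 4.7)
The plan is to combine the key identity \eqref{k-i} with the order-preserving property of $\mathcal{A}^{-1}$ and the global boundedness of $\gamma$ implied by \eqref{bounded} to convert the nonlocal right-hand side of \eqref{k-i} into a linear expression in $v$, from which a pointwise ordinary differential inequality in $t$ for $v(\cdot,x)$ can be read off directly. The role of the source term $-uf(u)$ is to produce an effective linear damping on $v$ that dominates the growth coming from $\gamma(v)u$.

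First I will observe that continuity of $\gamma$ on $[0,\infty)$ together with \eqref{bounded} gives $M:=\sup_{s\ge 0}\gamma(s)<\infty$, so the non-negativity of $u$ yields the pointwise bound $\gamma(v)u\le Mu$. Applying Lemma~\ref{uf(u)} with $\alpha=1$, $\beta=0$, and crucially with the coefficient $a_1=M+1$ produces a constant $B\ge 0$, depending only on $M$ and $f$, such that $uf(u)\ge (M+1)u-B$ for every $u\ge 0$; combining the two estimates yields
\begin{equation*}
\gamma(v)u-uf(u)\;\le\;-u+B\qquad\text{pointwise on }(0,T_{\max})\times\Omega.
\end{equation*}
Since the Helmholtz problem $z-\Delta z=g$ with homogeneous Neumann data obeys the weak maximum principle, $\mathcal{A}^{-1}$ is linear and order-preserving; using $\mathcal{A}^{-1}[1]=1$ and $\mathcal{A}^{-1}[u]=v$ from \eqref{36}, I obtain
\begin{equation*}
\mathcal{A}^{-1}\bigl[\gamma(v)u-uf(u)\bigr]\;\le\;-v+B.
\end{equation*}

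Substituting this into \eqref{k-i} and discarding the non-negative term $\gamma(v)u$ on the left-hand side leaves the pointwise linear ODI
\begin{equation*}
v_t(t,x)+v(t,x)\;\le\;B,\qquad (t,x)\in(0,T_{\max})\times\Omega.
\end{equation*}
Freezing $x$ and integrating in $t$ gives $v(t,x)\le v^{in}(x)e^{-t}+B(1-e^{-t})\le\|v^{in}\|_{L^\infty(\Omega)}+B$, which is precisely \eqref{v} with $C:=B$. The argument is essentially algebraic once \eqref{k-i} is in hand; the only mild subtlety — and the main thing to get right — is the choice $a_1=M+1>M$ in Lemma~\ref{uf(u)}, since a smaller $a_1$ would leave $v$ with a non-positive effective damping coefficient and fail to produce a uniform-in-time bound. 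Note also that only assumption \eqref{f1} on $f$ is used here, consistently with the hypotheses of the lemma.
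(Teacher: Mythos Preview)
Your proof is correct and follows essentially the same approach as the paper: both combine the key identity \eqref{k-i} with the global bound $\gamma\le M$, invoke Lemma~\ref{uf(u)} with $a_1=M+1$ to absorb the $\gamma(v)u$ term into $uf(u)$, use the order-preserving property of $\mathcal{A}^{-1}$ together with $\mathcal{A}^{-1}[u]=v$, and then drop the non-negative term $\gamma(v)u$ to arrive at the pointwise ODI $v_t+v\le C$. The only cosmetic difference is that the paper adds $v$ to both sides of \eqref{k-i} before estimating, whereas you bound the integrand $\gamma(v)u-uf(u)\le -u+B$ first and then apply $\mathcal{A}^{-1}$; these are algebraically equivalent rearrangements of the same argument.
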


	\begin{proof} First, we point out that $0\leq v^{in}\in W^{3,p} (\Omega)$ with any $1\leq p<\infty$ by our assumption \eqref{uin},  regularity theory of elliptic equations and maximum principles.
		
		Due to assumption \eqref{gamma}  and \eqref{bounded}, there exists   a positive constant  $\gamma^*:=\sup_{\tau \in [0,\infty)} \{\gamma(\tau)\}<\infty$, such that  for any $(t,x) \in  [0,T_{\max})\times \Omega$, 
		\begin{equation}\label{39}
			0<  \gamma(v(t,x)) \leq \gamma^*.
		\end{equation} 
		By adding $v$ to both sides of the key identity  \eqref{k-i}, it follows from \eqref{46} and \eqref{39} that
		\begin{equation*}
			\begin{aligned}
				v_t+v+\gamma(v)u+\mathcal{A}^{-1}\left[uf(u)\right]
				&=\mathcal{A}^{-1}\left[\gamma(v)u+u\right]\\
				&\leq \left(\gamma^*+1\right)\mathcal{A}^{-1}[u]\\
				&\leq  \left(\gamma^*+1\right)\mathcal{A}^{-1}\left[ \frac{uf(u)}{a_1}+\frac{b_1}{a_1}\right],
			\end{aligned}
		\end{equation*}
		which  by choosing $a_1=  \gamma^*+1 $, gives rise to
		\begin{equation}\label{t2}
			v_t+v+\gamma(v)u \leq C,
		\end{equation}
		with $C>0$ depending  only on $f$ and $\gamma$. Noticing that $\gamma(v)u$ is non-negative, we can derive \eqref{v} by standard ODE techniques. This completes the proof.
	\end{proof}
	\subsection{Uniform upper bound for $v$ with  unbounded motility}
	When $\gamma$ becomes unbounded near  infinity, the previous  method fails. We then modify  the comparison argument in \cite{JiLa2023} to derive a uniform upper bound for $v$. We begin with the following  auxiliary lemma.

	\begin{lemma}\label{auxiliary}
		Let $N\geq 1$. Suppose that $\gamma$ satisfies assumption \eqref{gamma} and $\gamma$ is unbounded near infinity, i.e.,
		\begin{equation}\label{unbounded}
			\limsup_{s\rightarrow \infty} \gamma(s) =\infty.
		\end{equation}
		Then there exists  a constant $s_* \geq \max \{ \|v^{in}\|_{\infty},\beta_1 \}$  such that
		\begin{equation}\label{s_*}
			\gamma(s_*)=\max_{s\in [
				0,s_*]}\{\gamma(s)\}.
		\end{equation}
		Here,   $\beta_1$ is the  constant specified in Remark \ref{beta}.
	\end{lemma}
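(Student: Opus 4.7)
The plan is to extract $s_*$ directly from continuity of $\gamma$ on compact intervals combined with the unboundedness assumption \eqref{unbounded}. First, I would set $M := \max\{\|v^{in}\|_\infty, \beta_1\}$ and observe that $\gamma_M := \max_{s \in [0, M]} \gamma(s)$ is finite by continuity of $\gamma$ on the compact interval $[0, M]$, which is guaranteed by \eqref{gamma}.

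Next, I would use \eqref{unbounded} to pick a sequence $t_n \to \infty$ with $\gamma(t_n) \to \infty$. For each $n$, continuity of $\gamma$ on the compact interval $[0, t_n]$ produces a maximizer $s_n \in [0, t_n]$ satisfying
\[
\gamma(s_n) = \max_{s \in [0, t_n]} \gamma(s) \geq \gamma(t_n).
\]
Fixing $n$ large enough so that $\gamma(t_n) > \gamma_M$, I would rule out the possibility $s_n \leq M$: it would imply $\gamma(s_n) \leq \gamma_M < \gamma(t_n) \leq \gamma(s_n)$, a contradiction. Hence $s_n > M$, and setting $s_* := s_n$ gives $s_* \geq \max\{\|v^{in}\|_\infty, \beta_1\}$. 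Finally, since $[0, s_*] \subset [0, t_n]$, the defining property of $s_n$ as a global maximizer on $[0, t_n]$ yields $\gamma(s_*) \geq \gamma(s)$ for every $s \in [0, s_*]$, which is precisely \eqref{s_*}.

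I do not anticipate any serious obstacle here; the statement reduces to a Weierstrass extreme value argument together with a pigeonhole step. The only subtlety is guaranteeing that the maximizer on $[0, t_n]$ is forced strictly beyond $M$, and this is secured by choosing $n$ so that $\gamma(t_n)$ exceeds the a priori bound $\gamma_M$ on $[0,M]$; after that, the defining property of $s_n$ as a maximizer over $[0, t_n]$ automatically doubles as the running-maximum property required in \eqref{s_*}.
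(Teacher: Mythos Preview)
Your proposal is correct and follows essentially the same approach as the paper: both pick maximizers of $\gamma$ on expanding compact intervals and use the unboundedness hypothesis to force the maximizer past the threshold $\max\{\|v^{in}\|_\infty,\beta_1\}$. The only cosmetic difference is that the paper uses the intervals $[0,\,j\|v^{in}\|_\infty]$ indexed by integers $j$ and argues $s_j\to\infty$, whereas you use intervals $[0,t_n]$ with $\gamma(t_n)\to\infty$ and compare directly against $\gamma_M$; the underlying Weierstrass-plus-unboundedness idea is identical.
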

	\begin{proof}
		Let $j \geq 1$ be  integers. We define
		\begin{equation*}
			M_j:=\max_{s\in [0,j\|v^{in}\|_{L^\infty(\Omega)}]}\{ \gamma(s)\},
		\end{equation*}
		\begin{equation*}
			s_j:=\sup\{ s\in [0,j\|v^{in}\|_{L^\infty(\Omega)}]: \gamma(s)=M_j\},
		\end{equation*}
		so that 
		\begin{equation*}
			\gamma(s_j)=M_j=\max_{s\in[0,s_j]}\{ \gamma(s)\}.
		\end{equation*}
		Hence, $\{ s_j\}_{j\geq  1}$ and $\{M_j\}_{j\geq 1}$ are non-decreasing sequences, and the unboundedness of $\gamma$ in \eqref{unbounded}  ensures that $\lim_{j\rightarrow \infty} M_j=\infty$ and $\lim_{j\rightarrow \infty} s_j=\infty.$
		It follows that 
		\begin{equation*}
			j_0:=\inf \left\{ j\geq 1,s_j\geq \max\{ \|v^{in}\|_{L^\infty(\Omega)} ,\beta_1\} \right\}<\infty.
		\end{equation*} 
		Let $s_*:= s_{j_0}$. Thus,
		\begin{equation*}
			\gamma(s_*)=\gamma(s_{j_0})=M_{s_{j_0}}=\max_{s\in [0,s_{j_0}]}\{ \gamma(s) \}=\max_{s\in [0,s_*] }\{ \gamma(s)\},
		\end{equation*} and this completes the proof.	\end{proof}

	Next, we set 
	\begin{equation*}
		\gamma'_i(s):=\left\{
		\begin{aligned}
			&0,~~~~&s\in [0,s_*), \\
			&(\gamma'(s))_+=\max\{ \gamma'(s),0\},~~~~& s\geq s_*, 
		\end{aligned}
		\right.
	\end{equation*}
	
	\begin{equation*}\ 
		\gamma'_d(s):=\left\{
		\begin{aligned}
			&0,~~~~&s\in [0,s_*), \\
			&-(\gamma'(s))_-=\min\{ \gamma'(s),0\},~~~~& s\geq s_*, 
		\end{aligned}
		\right.
	\end{equation*}
	with $\gamma_i(s_*)=\gamma_d(s_*)=0$.
	Based on the above definitions, we can deduce the following properties.
	\begin{subequations}\label{gammaid}
		\begin{align}
			&	\gamma_i(s)\geq 0\geq \gamma_d(s)~~~~&s\in [0,\infty),  \label{gammaid1} \\
			&	\gamma(s)=\gamma(s_*)+\gamma_i(s)+\gamma_d(s),~~~~& s\in [s_*,\infty),   \label{gammaid2}\\
			&	\gamma_i(s)=\gamma_d(s)=0,~~~~& s\in [0,s_*).   \label{gammaid3}
		\end{align}
	\end{subequations}
	We also define
	\begin{equation}\label{Gamma}
		\Gamma_d(s)=\int_{s_*}^{s}\gamma_d(\sigma)\;\rd \sigma,~~~~s\in [0,\infty),
	\end{equation}
	and it follows   that 
	\begin{subequations} \label{Gamma2}
		\begin{align}
			&\Gamma_d(s)=0,~~~~&s\in [0,s_*), \label{Gamma21}\\
			&	0\geq \Gamma_d(s)\geq(s-s_*)\gamma_d(s) \geq s\gamma_d(s),~~~~& s\in [s_*,\infty).\label{Gamma22}
		\end{align}
	\end{subequations}
	With these notations, we can derive the following auxiliary conclusion.
	\begin{lemma}
		There holds 
		\begin{equation}\label{ugamma}
			u\gamma(v)\leq u [ \gamma(s_*)+ \gamma_i (\|v\|_{L^\infty(\Omega)}) ]+\mathcal{A}[\Gamma_d(v)]\qquad \text{in}\;\;(0,T_{\max})\times \Omega.
		\end{equation}
	\end{lemma}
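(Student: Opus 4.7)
The plan is to bound the left-hand side of \eqref{ugamma} by decomposing $\gamma(v)$ via \eqref{gammaid} and realizing the ``decreasing'' part as an image of $\mathcal{A}$ through the elliptic equation \eqref{36}. First I would establish the pointwise scalar inequality
\begin{equation*}
\gamma(v) \leq \gamma(s_*) + \gamma_i(v) + \gamma_d(v), \qquad v \geq 0,
\end{equation*}
which is exactly the identity \eqref{gammaid2} on $[s_*,\infty)$ and, on $[0,s_*)$, reduces to $\gamma(v)\leq \gamma(s_*)$, itself the maximality property \eqref{s_*}. Since $\gamma_i'\geq 0$ by construction, the function $\gamma_i$ is non-decreasing on $[0,\infty)$, so $\gamma_i(v)\leq \gamma_i(\|v\|_{L^\infty(\Omega)})$ pointwise. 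Multiplying the scalar inequality by $u\geq 0$ gives
\begin{equation*}
u\gamma(v) \leq u\bigl[\gamma(s_*) + \gamma_i(\|v\|_{L^\infty(\Omega)})\bigr] + u\gamma_d(v),
\end{equation*}
reducing everything to the pointwise inequality $u\gamma_d(v) \leq \mathcal{A}[\Gamma_d(v)]$.

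For the latter, I would expand $\mathcal{A}[\Gamma_d(v)] = \Gamma_d(v)-\Delta\Gamma_d(v)$ via the chain rule and use the elliptic identity $\Delta v = v-u$ from \eqref{36}:
\begin{equation*}
\Delta\Gamma_d(v) = \gamma_d'(v)\,|\nabla v|^2 + \gamma_d(v)\,(v-u),
\end{equation*}
which gives
\begin{equation*}
\mathcal{A}[\Gamma_d(v)] - u\gamma_d(v) = \bigl[\Gamma_d(v) - v\gamma_d(v)\bigr] - \gamma_d'(v)\,|\nabla v|^2.
\end{equation*}
Both summands on the right are non-negative: $\gamma_d'\leq 0$ by definition, so the gradient term is $\geq 0$; and $\Gamma_d(v) - v\gamma_d(v) \geq 0$ follows directly from \eqref{Gamma2}, namely equality to zero on $\{v<s_*\}$ by \eqref{Gamma21}, and the chain $\Gamma_d(v)\geq (v-s_*)\gamma_d(v) \geq v\gamma_d(v)$ on $\{v\geq s_*\}$ recorded in \eqref{Gamma22} (the last step using $s_*\geq 0$ and $\gamma_d(v)\leq 0$). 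Combining with the previous display yields \eqref{ugamma}.

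The main obstacle I expect is purely regularity-theoretic: $\gamma_d'$ can have a jump at $s_*$ whenever $\gamma'(s_*)<0$, so $\Gamma_d$ is only $C^{1,1}$ and the identity for $\Delta\Gamma_d(v)$ does not hold in the classical sense at points where $v=s_*$. To justify it I would approximate $\gamma_d$ by a $C^2$ family $\gamma_d^{\varepsilon}$ satisfying $(\gamma_d^{\varepsilon})'\leq 0$ and $\gamma_d^{\varepsilon}\to\gamma_d$ uniformly on compacts, carry out the argument above for the smooth primitive $\Gamma_d^{\varepsilon}$, and then pass to the limit $\varepsilon\to 0$ using the classical $C^{1,2}$ regularity of $v$ from Theorem \ref{local}. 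Alternatively, one can exploit the fact that $\nabla v=0$ almost everywhere on the level set $\{v=s_*\}$ and verify the pointwise inequality separately on the open sets $\{v<s_*\}$ and $\{v>s_*\}$, on each of which $\Gamma_d(v)$ is genuinely smooth.
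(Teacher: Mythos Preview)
Your argument is correct and follows the same route as the paper's: decompose $\gamma(v)$ via \eqref{gammaid}, bound $\gamma_i(v)$ by $\gamma_i(\|v\|_{L^\infty})$, and show $u\gamma_d(v)\le \mathcal A[\Gamma_d(v)]$ through the elliptic equation \eqref{36} together with $\gamma_d'\le 0$ and \eqref{Gamma2}; the paper merely organizes this as an explicit case split on $\{v\ge s_*\}$ versus $\{v<s_*\}$. One small simplification: the regularity obstacle you anticipate does not arise, because the defining property \eqref{s_*} forces $\gamma'(s_*)\ge 0$, so $\gamma_d'(s_*)=\min\{\gamma'(s_*),0\}=0$ agrees with the value from the left and $\gamma_d'$ is continuous on $[0,\infty)$; hence $\Gamma_d\in C^2$ and the chain rule for $\Delta\Gamma_d(v)$ holds classically, with no approximation needed.
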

	\begin{proof}
		Fix $(t,x)\in (0,T_{\max})\times\Omega$.
		
		If $v(t,x)\geq s_*$,   from  \eqref{36}, \eqref{gammaid1} and \eqref{gammaid2}, as well as the non-negativity of  $u$, we have
		\begin{equation*} 
			\begin{aligned}
				u(t,x)\gamma\left( v(t,x)\right)&=u(t,x)[\gamma(s_*) +\gamma_i(v(t,x))+\gamma_d(v(t,x))]\\
				&\leq u(t,x)\gamma(s_*)+u(t,x)\gamma_i(\|v\|_{L^\infty(\Omega)})+\gamma_d(v(t,x))(v-\Delta v)(t,x)\\
				&= u(t,x)\gamma(s_*)+u(t,x)\gamma_i(\|v\|_{L^\infty(\Omega)})+\gamma_d(v(t,x))v(t,x)\\
				&~~~~-\text{div}(\gamma_d(v)\nabla v)(t,x)+\gamma'_d(v(t,x))|\nabla v(t,x)|^2\\
				&\leq u(t,x)\gamma(s_*)+u(t,x)\gamma_i(\|v\|_{L^\infty(\Omega)})+v(t,x)\gamma_d(v(t,x))-\Delta \Gamma_d(v)(t,x).
			\end{aligned}
		\end{equation*}
		It then follows	from  \eqref{Gamma22} that 
		\begin{equation*} 
			\begin{aligned}
				u(t,x)\gamma\left( v(t,x)\right)&\leq u(t,x)\gamma(s_*)+u(t,x)\gamma_i(\|v\|_{L^\infty(\Omega)})+\Gamma_d(v)(t,x)-\Delta \Gamma_d(v)(t,x)\\
				&= u(t,x)\gamma(s_*)+u(t,x)\gamma_i(\|v\|_{L^\infty(\Omega)})+\mathcal{A}(\Gamma_d(v))(t,x).\\
			\end{aligned}
		\end{equation*}
		
		If $0\leq v(t,x)< s_*$,  the continuity of  $v$ implies that  there exists  $ r>0,$  such that for all $y\in B_r(x)\subset\Omega,$   we have  $0\leq v(t,y) < s_*.$
		Therefore,    $\Gamma_d(v(t)) \equiv 0$ in $ B_r(x)$ and thus $\mathcal{A}[\Gamma_d(v)](t,x)=0.$  Recalling  \eqref{s_*}, we obtain
		$$ u(t,x)\gamma(v(t,x)) \leq u(t,x)\gamma(s_*)\leq u(t,x)[\gamma(s_*)+\gamma_i(\|v\|_{L^\infty(\Omega)})]+\mathcal{A}[\Gamma_d(v)](t,x).$$
		This completes the proof.\end{proof}
	\begin{lemma}\label{lemmav2} 
		Let $N\geq 1$. Suppose that $ \gamma$ satisfies assumptions \eqref{gamma} and \eqref{unbounded}. Then for all $t\in [0, T_{\max})$,
		\begin{equation*}
			\|v(t,\cdot)\|_{L^\infty(\Omega)} \leq s_*.
		\end{equation*}
	\end{lemma}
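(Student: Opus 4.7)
The plan is to derive a pointwise differential inequality for $v$ by combining the key identity \eqref{k-i}, the upper bound \eqref{ugamma}, and the lower bound from \eqref{44}, and then to run an ODE comparison argument at the spatial maximum of $v$.

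First, since the elliptic maximum principle renders $\mathcal{A}^{-1}$ monotone, applying $\mathcal{A}^{-1}$ to both sides of \eqref{ugamma} and using $\mathcal{A}^{-1}[u]=v$ together with $\mathcal{A}^{-1}\mathcal{A}[\Gamma_d(v)]=\Gamma_d(v)$ (valid because $\Gamma_d(v)$ inherits the homogeneous Neumann boundary condition from $v$) gives
\begin{equation*}
	\mathcal{A}^{-1}[u\gamma(v)] \leq \bigl[\gamma(s_*)+\gamma_i(\|v\|_{L^\infty(\Omega)})\bigr]\,v+\Gamma_d(v).
\end{equation*}
On the other hand, \eqref{44} yields $\mathcal{A}^{-1}[uf(u)]\ge v-\beta_1$. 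Substituting both estimates into the key identity \eqref{k-i} produces the pointwise inequality
\begin{equation*}
	v_t+\gamma(v)u+v \leq \bigl[\gamma(s_*)+\gamma_i(\|v\|_{L^\infty(\Omega)})\bigr]\,v+\Gamma_d(v)+\beta_1 \quad\text{in }(0,T_{\max})\times\Omega.
\end{equation*}

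Next, set $M(t):=\|v(t,\cdot)\|_{L^\infty(\Omega)}$ and let $x(t)\in\bar\Omega$ be a spatial maximum of $v(t,\cdot)$. There $\nabla v=0$ and $\Delta v\le 0$, so equation \eqref{36} gives $u(t,x(t))=(v-\Delta v)(t,x(t))\ge M(t)$, hence $\gamma(v)u\ge\gamma(M)M$ at $(t,x(t))$ because $\gamma>0$. When $M(t)\ge s_*$, the decomposition \eqref{gammaid2} allows us to write $\gamma(M)=\gamma(s_*)+\gamma_i(M)+\gamma_d(M)$; plugging this into the pointwise inequality at $(t,x(t))$ leads to a cancellation of the terms $[\gamma(s_*)+\gamma_i(M)]M$ and leaves
\begin{equation*}
	v_t(t,x(t)) \leq \Gamma_d(M(t))-M(t)\gamma_d(M(t))-M(t)+\beta_1.
\end{equation*}
Since $\Gamma_d(s_*)=\gamma_d(s_*)=0$, the right-hand side collapses to $\beta_1-s_*\le 0$ at $M=s_*$, the last inequality coming from $s_*\ge\beta_1$ guaranteed by Lemma \ref{auxiliary}.

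Finally, Danskin's theorem ensures that $M$ is locally Lipschitz in $t$ and that $M'(t)=v_t(t,x(t))$ at almost every differentiability point, so the previous display yields $M'(t)\le g(M(t))$ a.e., where $g$ is a locally Lipschitz function (using $\gamma\in C^3$) with $g(s_*)\le 0$. Because $M(0)\le\|v^{in}\|_{L^\infty(\Omega)}\le s_*$ by Lemma \ref{auxiliary}, the constant function $\bar y\equiv s_*$ is a supersolution of $\bar y'=g(\bar y)$, and the standard Lipschitz comparison principle then forces $M(t)\le s_*$ throughout $[0,T_{\max})$, which is exactly the claim.

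The main obstacle is the passage from the pointwise inequality to the ODE comparison: one must carry out the cancellation that turns the right-hand side into the manageable form $\Phi(M)-M+\beta_1$ (with $\Phi(s):=\Gamma_d(s)-s\gamma_d(s)$ satisfying $\Phi(s_*)=0$), and then argue that, even though $g(M)$ may well be positive for values of $M$ slightly above $s_*$, the Lipschitz comparison against the constant barrier $\bar y\equiv s_*$ still traps $M$. This is precisely what the inequality $g(s_*)\le 0$ at the barrier, combined with Danskin-type differentiation of $M$, is designed to deliver.
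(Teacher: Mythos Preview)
Your argument is correct and arrives at the same destination as the paper, but the route is organized differently. The paper rewrites $\gamma(v)u$ as $\gamma(v)v-\gamma(v)\Delta v$ via \eqref{36}, obtains a genuine parabolic inequality for $v$, and then compares $v$ against a spatially constant supersolution $V(t)$ solving an auxiliary ODE (with the non-autonomous coefficient $\gamma_i(\|v\|_\infty)$); only afterwards does it show $(V-s_*)_+\equiv 0$ by the same Gr\"onwall-type step you use. You instead bypass $V$ entirely: at the spatial maximum you exploit $u=v-\Delta v\ge M$ directly, which produces the cancellation $[\gamma(s_*)+\gamma_i(M)-\gamma(M)]M=-M\gamma_d(M)$ in one stroke and leads straight to $M'\le \Phi(M)-M+\beta_1$. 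Your version is more economical; the paper's version avoids the Danskin machinery and the (minor but real) need to check that $\Delta v\le 0$ also at boundary maxima under the Neumann condition. Both proofs hinge on the same function $\Phi(s)=\Gamma_d(s)-s\gamma_d(s)$ (called $G$ in the paper), its vanishing at $s_*$, and its local Lipschitz regularity, so the final Gr\"onwall step for $(M-s_*)_+$ is essentially identical.
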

	\begin{proof}
		Combining \eqref{ugamma} with the elliptic comparison principle yields
		$$ \mathcal{A}^{-1}[u\gamma(v)]\leq [\gamma(s_*)+\gamma_i(\|v\|_\infty)]v+\Gamma_d(v).$$
		Using the key identity \eqref{k-i}, we have
		$$v_t+\gamma(v)u+\mathcal{A}^{-1}[uf(u)]\leq [\gamma(s_*)+\gamma_i(\|v\|_\infty)]v+\Gamma_d(v).$$
		Adding $v$ to both sides, utilizing \eqref{36}  and   \eqref{44}, we obtain
		\begin{equation*} 
			\begin{aligned}
				&v_t-\gamma(v)\Delta v+\gamma(v)v+\mathcal{A}^{-1}[uf(u)]+v\\
				&~~~~~~~~\leq [\gamma(s_*)+\gamma_i(\|v\|_\infty)]v+\Gamma_d(v)+\mathcal{A}^{-1}[u]\\
				&~~~~~~~~\leq [\gamma(s_*)+\gamma_i(\|v\|_\infty)]v+\Gamma_d(v)+\mathcal{A}^{-1}\left[uf(u)+\beta_1\right], 
			\end{aligned}
		\end{equation*}
		which implies
		$$v_t-\gamma(v)\Delta v+\gamma(v)v+	v\leq [\gamma(s_*)+\gamma_i(\|v\|_\infty)]v+\Gamma_d(v)+\beta_1,~~~~ (t,x)\in (0,T_{\max})\times \Omega.$$
		Since $s_*\geq \beta_1$, $v$ satisfies 
		\begin{equation} \label{vunbounded}
			\left\{
			\begin{aligned}
				v_t-\gamma(v)\Delta v+\gamma(v)v+	&v\leq [\gamma(s_*)+\gamma_i(\|v\|_\infty)]v+\Gamma_d(v)+s_*,~~~~&(t,x)\in (0,T_{\max})\times \Omega, \\
				&\nabla v\cdot \textbf{n}=0,~~~~& (t,x)\in (0,T_{\max})\times \partial \Omega,\\
				&v(0)=v^{in},~~~~&x\in \Omega.
			\end{aligned}
			\right.
		\end{equation}
		Next, let $V$ be a solution to the following ordinary differential equation:
		\begin{equation} \label{vtildeunbounded}
			\left\{
			\begin{aligned}
				\frac{ \rd V}{\rd t}+\gamma(V)V+&V= [\gamma(s_*)+\gamma_i(\|v\|_\infty)]V+\Gamma_d(V)+s_*,~~~~&t \in (0,T_{\max}), \\	&V(0)=s_*.
			\end{aligned}
			\right.
		\end{equation}
		Notice that the non-negativity of $\Gamma_d$ ensures that $V$ is well-defined in  $[0,T_{\max})$. Then it follows from \eqref{vunbounded}, \eqref{vtildeunbounded}, the fact $\|v^{in}\|_{L^\infty(\Omega)}\leq s_*$, and the parabolic comparison principle that
		\begin{equation}\label{14}
			\|v(t,\cdot)\|_{L^\infty(\Omega)}\leq V(t),~~~~\forall \;t \in (0,T_{\max}).
		\end{equation}
		On the one hand, for any $T \in (0, T_{\max})$, by the continuity of $v$, the non-negativity of $\gamma$ and $V$,  and the non-positivity of $\Gamma_d$, we infer by Gronwall's inequality that
		\begin{equation*}
			V(t)\leq \mathcal{V}_T:=s_*(1+T)\exp\left\{T\left[ \gamma(s_*)+\sup_{\tau\in[0,T]}\{ \gamma_i(\|v(\tau)\|_\infty)\}\right] \right\},~~~~~t\in[0,T].
		\end{equation*}
		On the other hand, it follows  from \eqref{vtildeunbounded}, \eqref{14} and the monotonicity of $\gamma_i$ that
		\begin{equation*}
			\begin{aligned}
				\frac{ \rd V}{\rd t} +V \leq  [\gamma(s_*)+\gamma_i(V)]V+\Gamma_d(V)-\gamma(V)V+s_*,\qquad t \in (0,T_{\max}).
			\end{aligned} 
		\end{equation*}
		Now setting $G(s):=\Gamma_d(s)-s\gamma_d(s) $ for $s\geq 0$. From \eqref{gammaid3} and \eqref{Gamma21}, we know that  $G(s)=0$ for $s\in [0,s_*]$. It follows that
		\begin{equation*}
			\begin{aligned}
				\frac{\rd (V-s_*)_+}{\;\rd t} +(V -s_*)_+  
				&\leq [\gamma(s_*)+\gamma_i(V)-\gamma (V)]V\text{sign}_+(V-s_*) +\Gamma_d(V)\text{sign}_+(V-s_*)\\
				&= [\gamma(s_*)+\gamma_i(V)-\gamma(s_*) -\gamma_i(V)-\gamma_d(V)]V\text{sign}_+(V-s_*) +\Gamma_d(V)\text{sign}_+(V-s_*)\\
				&= \left[ G(V)-G(s_*) \right] \text{sign}_+(V-s_*)\\
				&= \frac{G(V)-G(s_*) }{V -s_*} (V -s_*)_+.\\
			\end{aligned} 
		\end{equation*}
		Since $G'(s)=-s\gamma'_d(s)\geq 0$ for $s\geq s_*$, we have
		\begin{equation*}
			0\leq \frac{G(V)-G(s_*) }{V -s_*} (V -s_*)_+ \leq (V -s_*)_+ \sup_{[s_*,\mathcal{V}_T] }\{ G'(s)\},~~~~t\in [0,	T].
		\end{equation*}	 
		Thus, we  conclude that
		$ V(t)\leq  s_*,$ for $t\in [0,T]$.
		Since $T$ is arbitrary in $(0,T_{\max})$, the proof is completed by \eqref{14}.
	\end{proof}

	\section{ Time-independent upper bounds for $u$ in 2D}
	In this section, we aim to establish the uniform upper bound for $u$ when $N=2$. To begin with, we recall that  $\gamma(\cdot)$ is continuous and   $0\leq v\leq v^*$ in $[0,T_{\max})\times\bar{\Omega}$ by \eqref{rangev}. Thus, there are two positive constants $\gamma_*$ and $\gamma^*$ such that
	\begin{equation}\label{rangegamma}
		0\leq \gamma_*:= \min_{\tau \in [0,v^*]} \gamma(\tau)\leq \gamma(v(t,x)) \leq \gamma^*:=  \max_{\tau \in [0,v^*]}\gamma(\tau)<\infty,\qquad \text{on}\;\;[0,T_{\max})\times\bar{\Omega}.
	\end{equation}
	
	\begin{lemma}\label{lemmau}
		Assume that $N\geq 1$  and $(u,v)$ represents the classical solution of problem \eqref{0.1} on $[0,T_{\max}) \times \Omega $. Then there exists a constant $C>0$ depending only on $\Omega$, $f$ and $\|u^{in}\|_{L^1(\Omega)}$ such that
		\begin{equation}\label{g}
			\int_\Omega u \;\rd x\leq C, \qquad 	\text{for any } t \in [0,T_{\max}).
		\end{equation}
	\end{lemma}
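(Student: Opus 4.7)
The plan is to derive an ODE for the spatial mass $y(t):=\int_\Omega u(t,x)\,\mathrm{d}x$ and then close it using the super-linear degradation built into $f$. First, I would integrate the equation \eqref{35} over $\Omega$; the no-flux boundary condition \eqref{37} on $u\gamma(v)$ makes the divergence term vanish, leaving
\begin{equation*}
\frac{\mathrm{d}}{\mathrm{d}t}\int_\Omega u\,\mathrm{d}x = -\int_\Omega u f(u)\,\mathrm{d}x, \qquad t\in(0,T_{\max}).
\end{equation*}

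Next, I would invoke the pointwise lower bound from Lemma \ref{uf(u)} (in the form recorded as \eqref{44} in Remark \ref{beta}), namely $s\le s f(s)+\beta_1$ for all $s\ge 0$. Integrating this over $\Omega$ gives $\int_\Omega u\,\mathrm{d}x \le \int_\Omega u f(u)\,\mathrm{d}x + \beta_1|\Omega|$, so with $y(t)=\int_\Omega u\,\mathrm{d}x$ we obtain the linear differential inequality
\begin{equation*}
y'(t) \le -y(t) + \beta_1|\Omega|, \qquad t\in(0,T_{\max}).
\end{equation*}

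A standard comparison argument (or direct Gronwall) then yields $y(t)\le \max\{\|u^{in}\|_{L^1(\Omega)},\,\beta_1|\Omega|\}$ for all $t\in[0,T_{\max})$, giving the desired uniform-in-time $L^1$-bound with constant $C$ depending only on $\Omega$, $f$ (through $\beta_1$) and $\|u^{in}\|_{L^1(\Omega)}$. There is no real obstacle here: the only conceptual ingredient beyond the mass identity is that the hypothesis \eqref{f1} ensures $f\to\infty$, which is exactly what powers the $-y$ term on the right-hand side via \eqref{44}. The upper-growth restriction \eqref{f2} is not needed at this step.
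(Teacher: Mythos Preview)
Your proposal is correct and follows essentially the same approach as the paper: integrate \eqref{35} over $\Omega$, use the pointwise bound \eqref{44} to convert the mass identity into the linear ODI $y'+y\le \beta_1|\Omega|$, and conclude by Gronwall. The paper's proof is identical in structure and substance.
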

	\begin{proof}
		Integrating \eqref{35} over $\Omega$,  we obtain that
		$$\frac{\rd}{\rd t}\int_\Omega u \;\rd x=-\int_\Omega uf(u)\;\rd x.$$
		Recall that by \eqref{44},
		$$\int_\Omega u \;\rd x\leq  \int_\Omega uf(u) \;\rd x+\beta_1|\Omega|.$$
		Therefore,
		$$\frac{\rd}{\rd t}\int_\Omega u \;\rd x+\int_\Omega u \;\rd x  \leq  \beta_1|\Omega|.$$  By solving the above differential equation, we complete the proof.
	\end{proof}

	\begin{lemma}
		Assume that $N\geq 1$  and $(u,v)$ represents the classical solution of problem \eqref{0.1} on $[0,T_{\max}) \times \Omega $. Then, there exists a constant $C>0$ depending only on $\gamma$, $f$ and $u^{in}$ such that
		\begin{equation}\label{h}
			v_t+\gamma^*v+\gamma(v)u\leq C,
		\end{equation}
		where $\gamma^*$ is defined in \eqref{rangegamma}.
	\end{lemma}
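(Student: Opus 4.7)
The plan is to use the key identity \eqref{k-i} and the uniform bound \eqref{rangegamma} on $\gamma(v)$, combined with the point-wise estimate \eqref{46} from Remark \ref{beta}, to turn the right-hand side of \eqref{k-i} into a pure constant after adding $\gamma^* v$ to both sides.

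Concretely, I would start from the key identity
\begin{equation*}
v_t+\gamma(v)u=\mathcal{A}^{-1}\left[\gamma(v)u-uf(u)\right],
\end{equation*}
and add $\gamma^* v$ on both sides. Since \eqref{36} reads $\mathcal{A}[v]=u$, we have $v=\mathcal{A}^{-1}[u]$, so $\gamma^* v=\mathcal{A}^{-1}[\gamma^* u]$, and hence
\begin{equation*}
v_t+\gamma^* v+\gamma(v)u=\mathcal{A}^{-1}\left[\gamma^* u+\gamma(v)u-uf(u)\right].
\end{equation*}
Using $\gamma(v)\leq \gamma^*$ from \eqref{rangegamma}, the bracket is bounded by $2\gamma^* u-uf(u)$.

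Next I would invoke \eqref{46} with the choice $a_1=2\gamma^*$ to find a constant $b_1=b_1(\gamma,f)\geq 0$ such that $2\gamma^* u\leq uf(u)+b_1$ point-wise, i.e.\ $2\gamma^* u-uf(u)\leq b_1$. Since $\mathcal{A}=I-\Delta$ with homogeneous Neumann boundary condition satisfies the elliptic comparison principle and maps constants to themselves ($\mathcal{A}^{-1}[b_1]=b_1$), we obtain
\begin{equation*}
v_t+\gamma^* v+\gamma(v)u\leq \mathcal{A}^{-1}[b_1]=b_1,
\end{equation*}
which is precisely \eqref{h} with $C=b_1$.

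The only subtle point is justifying the monotonicity of $\mathcal{A}^{-1}$ on ordered right-hand sides; this is a standard consequence of the weak maximum principle for the uniformly elliptic operator $I-\Delta$ under Neumann boundary conditions, so there is no real obstacle. All constants depend only on $\gamma$, $f$, and (through $v^*$, and thus $\gamma^*$) on $u^{in}$, $\Omega$ and $N$, as required.
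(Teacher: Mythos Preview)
Your proof is correct and follows essentially the same approach as the paper: add $\gamma^* v=\mathcal{A}^{-1}[\gamma^* u]$ to the key identity \eqref{k-i}, use $\gamma(v)\leq\gamma^*$ and the comparison principle for $\mathcal{A}^{-1}$, and apply \eqref{46} with $a_1=2\gamma^*$ so that the bracket reduces to the constant $b_1$. The only cosmetic difference is that the paper first substitutes $u\leq \frac{uf(u)}{a_1}+\frac{b_1}{a_1}$ inside $2\gamma^* u$ and then sets $a_1=2\gamma^*$, whereas you rearrange \eqref{46} directly into $2\gamma^* u-uf(u)\leq b_1$; the two are equivalent.
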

	\begin{proof}
		We deduce from   \eqref{46}, the key identity \eqref{k-i}, and \eqref{rangegamma}  that
		\begin{equation*}
			\begin{aligned}
				v_t+\gamma(v)u+\gamma^*v&= \mathcal{A}^{-1}[\gamma(v) u- uf(u)+\gamma^*u]\\
				&\leq \mathcal{A}^{-1}[2\gamma^* u- uf(u)]\\
				&\leq \mathcal{A}^{-1}\left[2\gamma^* \left( \frac{uf(u)}{a_1}+\frac{b_1}{a_1}\right)-uf(u)\right]\\
				&=\mathcal{A}^{-1}\left[ \left( \frac{2\gamma^*}{a_1}-1\right)uf(u) \right]+ \frac{2\gamma^*b_1}{a_1}.
			\end{aligned}
		\end{equation*}
		By choosing $a_1=  2\gamma^* $, we have
		\begin{equation*}
			v_t+\gamma(v)u+\gamma^*v \leq C,
		\end{equation*}
		where $C>0$ depending only on $\gamma$, $f$ and $u^{in}$.
		This 	 completes the proof.
	\end{proof}

	\begin{lemma}\label{lemma a}
		Assume that $N\geq 1$  and $(u,v)$ represents the classical solution of problem \eqref{0.1} on $[0,T_{\max}) \times \Omega $.  Then, there exists a constant $C>0$ depending only on $\gamma$, $f$, $\Omega$  and $ u^{in}$  such that for any $t \in [0,T_{\mathrm{max}})$,
		\begin{equation}\label{i}
			\int_\Omega(|\nabla v|^2+v^2)\;\rd x\leq C.
		\end{equation}
		Moreover, for any $t\in (0,T_\mathrm{max}-\tau)$, with $0<\tau<\mathrm{min}\left\{1,\frac{T_\mathrm{max}}{2}\right\}$, we have
		\begin{equation}\label{j}
			\int_{t}^{t+\tau}\int_\Omega u^2 \;\rd x \;\rd s\leq C,
		\end{equation}
		and
		\begin{equation}\label{k}
			\int_{t}^{t+\tau}\int_\Omega  |\nabla v|^4 \;\rd x\;\rd s\leq C.
		\end{equation}
	\end{lemma}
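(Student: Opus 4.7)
The plan is to derive the three bounds in sequence, leveraging the uniform $L^\infty$ bound on $v$ from Proposition~\ref{V}, the $L^1$ bound on $u$ from Lemma~\ref{lemmau}, and the Helmholtz structure \eqref{36}. Estimate \eqref{i} is immediate: testing $v - \Delta v = u$ with $v$ and integrating by parts gives
\[\int_\Omega (|\nabla v|^2 + v^2)\,\rd x = \int_\Omega u v\,\rd x \leq v^* \int_\Omega u\,\rd x \leq C.\]

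For \eqref{j}, the plan is to produce a pointwise-in-time differential inequality of the form
\[\frac{\rd}{\rd t}\int_\Omega(|\nabla v|^2+v^2)\,\rd x + 2\gamma_* \int_\Omega u^2\,\rd x \leq C\]
and then integrate over $[t,t+\tau]$. The crucial algebraic step is to substitute $-\Delta v = u - v$ (from \eqref{36}) after integration by parts, yielding the clean identity
\[\frac{1}{2}\frac{\rd}{\rd t}\int_\Omega (|\nabla v|^2+v^2)\,\rd x = \int_\Omega u v_t\,\rd x.\]
I then plug in the key identity \eqref{k-i}, namely $v_t = \mathcal{A}^{-1}[\gamma(v)u - u f(u)] - \gamma(v) u$, and exploit the self-adjointness of $\mathcal{A}^{-1}$ together with $\mathcal{A}^{-1}[u] = v$ to convert the nonlocal term to a local one:
\[\int_\Omega u v_t\,\rd x = \int_\Omega \gamma(v) u v\,\rd x - \int_\Omega u v f(u)\,\rd x - \int_\Omega \gamma(v) u^2\,\rd x.\]
Using $\gamma(v) \geq \gamma_*$ from \eqref{rangegamma}, $v \leq v^*$, $\int_\Omega u\,\rd x \leq C$, and the pointwise lower bound $u f(u) \geq -\beta_1$ from Remark~\ref{beta}, I obtain the differential inequality; integrating in time and invoking \eqref{i} at the initial endpoint delivers \eqref{j}.

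For \eqref{k}, I would apply the two-dimensional Gagliardo-Nirenberg inequality to the vector field $\nabla v$ in the form $\|\nabla v\|_{L^4}^4 \leq C \|\nabla v\|_{H^1}^2 \|\nabla v\|_{L^2}^2$. The factor $\|\nabla v\|_{L^2}^2$ is uniformly controlled by \eqref{i}, while Neumann-elliptic regularity for $v - \Delta v = u$ gives $\|\nabla v\|_{H^1}^2 \leq C(\|u\|_{L^2}^2 + 1)$; integrating in time and using \eqref{j} yields the claim. The main obstacle lies in the derivation of \eqref{j}: the nonlocal contribution $\int_\Omega u\,\mathcal{A}^{-1}[\gamma(v)u - u f(u)]\,\rd x$ is a priori difficult to control, since $uf(u)$ is only known to be integrable at this stage, and a naive bound would lose the coercivity. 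It is precisely the self-adjoint rewriting to $\int_\Omega v\,(\gamma(v)u - u f(u))\,\rd x$ that allows the dissipative term $-\int_\Omega \gamma(v) u^2\,\rd x$ to appear with the correct sign, producing the $L^2$-in-time control on $u$.
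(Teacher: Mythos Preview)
Your arguments for \eqref{i} and \eqref{j} are correct and take a slightly different route from the paper. For \eqref{i} you give a direct one-line argument using $v\le v^*$ and the $L^1$-bound on $u$, while the paper obtains it as a byproduct of an ODI. For \eqref{j}, the paper bounds $\int_\Omega u v_t\,\rd x$ via the pointwise inequality $v_t+\gamma^*v+\gamma(v)u\le C$ (their \eqref{h}), obtaining
\[
\frac12\frac{\rd}{\rd t}\int_\Omega(|\nabla v|^2+v^2)\,\rd x+\int_\Omega\gamma(v)u^2\,\rd x+\gamma^*\int_\Omega(|\nabla v|^2+v^2)\,\rd x\le C,
\]
whereas you insert the key identity \eqref{k-i} directly and use the self-adjointness of $\mathcal{A}^{-1}$ together with $\mathcal{A}^{-1}[u]=v$. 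Your approach avoids the detour through \eqref{h}; the paper's route gains the extra dissipative term $\gamma^*\int_\Omega(|\nabla v|^2+v^2)\,\rd x$, which you do not need since you already have \eqref{i}.

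There is, however, a genuine gap in your treatment of \eqref{k}. The Ladyzhenskaya-type inequality $\|\nabla v\|_{L^4}^4\le C\|\nabla v\|_{H^1}^2\|\nabla v\|_{L^2}^2$ is specific to $N=2$; in $N=3$ the interpolation reads $\|\nabla v\|_{L^4}^4\le C\|\nabla v\|_{H^1}^3\|\nabla v\|_{L^2}$, and the resulting factor $\|u\|_{L^2}^3$ is \emph{not} controlled by \eqref{j}. Since the lemma is stated for $N\ge 1$ and is invoked in the three-dimensional section (the proof of Lemma~\ref{ulogu} uses \eqref{k}), this matters. The paper instead interpolates between $\|v\|_{H^2}$ and $\|v\|_{L^\infty}$:
\[
\|\nabla v\|_{L^4(\Omega)}\le C\|v\|_{H^2(\Omega)}^{1/2}\|v\|_{L^\infty(\Omega)}^{1/2}+C\|v\|_{L^\infty(\Omega)},
\]
which holds with exponent $1/2$ in every dimension. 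Combined with $\|v\|_{L^\infty}\le v^*$ from Proposition~\ref{V} and elliptic regularity $\|v\|_{H^2}\le C\|u\|_{L^2}$, this yields $\|\nabla v\|_{L^4}^4\le C(\|u\|_{L^2}^2+1)$, after which \eqref{j} gives \eqref{k}. Replacing your two-dimensional Gagliardo--Nirenberg step by this dimension-free interpolation repairs the gap.
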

	
	\begin{proof}
		From \eqref{36} and \eqref{h}, there exists a positive constant $C$ depending only on $\gamma$, $f$  and $u^{in}$, such that
		\begin{equation*}
			\begin{aligned}
				\frac{1}{2}\frac{\rd }{\rd t}\int_\Omega \left(|\nabla v|^2+v^2 \right)\;\rd x
				&=\int_\Omega uv_t \;\rd x\\
				&\leq -\int_\Omega u\left(\gamma(v)u+\gamma^*v\right) \;\rd x+C\int_\Omega u \;\rd x.
			\end{aligned}
		\end{equation*}
		Invoking Lemma \ref{lemmau} and using \eqref{36} again, we obtain that 
		\begin{equation*}
			\begin{aligned}
				\frac{1}{2}\frac{\rd}{\rd t}\int_\Omega \left(|\nabla v|^2+v^2 \right)\;\rd x
				&\leq -\int_\Omega\gamma(v)u^2\;\rd x-\int_\Omega \gamma^*uv \;\rd x+C\\
				&=-\int_\Omega\gamma(v)u^2\;\rd x-\gamma^*\int_\Omega \left(|\nabla v|^2+v^2 \right)\;\rd x+C,
			\end{aligned}
		\end{equation*}
		which implies
		$$	\frac{1}{2}\frac{\rd}{\rd t}\int_\Omega \left(|\nabla v|^2+v^2 \right)\;\rd x+\int_\Omega\gamma(v)u^2\;\rd x+\gamma^*\int_\Omega \left(|\nabla v|^2+v^2 \right)\;\rd x\leq C,$$
		where $C>0$ depending only on $\gamma$, $f$, $\Omega$ and $u^{in}$.	We can derive \eqref{i} by standard ODE techniques. Then integrating  the above equality from $t$ to $t+\tau$, and recalling that $\gamma(v)\geq\gamma_*$, we obtain \eqref{j}.
		\par In view of Proposition \ref{V}, an application of the Gagliardo-Nirenberg inequality together with  the regularity theorem for elliptic equations yields that
		\begin{equation}\label{11}
			\| \nabla v\|_{L^4(\Omega)}\leq C\|  v\| _{H^2(\Omega)}^{1/2}\| v\|_{L^\infty(\Omega)}^{1/2}+C\| v\|_{L^\infty(\Omega)} \leq C\| u\|_{L^2(\Omega)}^{1/2}+C
		\end{equation} with a positive constant $C$.
		Therefore, it follows from \eqref{j} that there exists a constant $C>0$ depending on $\gamma$, $f$, $\Omega$ and $ u^{in} $, such that
		\begin{equation*} 
			\int_{t}^{t+\tau}\int_\Omega |\nabla v|^4\;\rd x \;\rd s
			\leq C\int_{t}^{t+\tau}\int_\Omega u^2\;\rd x\,\rd s+C\leq C.
		\end{equation*}This completes the proof.	\end{proof}
	
	\begin{proposition}\label{lemmau2}
		When $N=2$, there exists a constant $C>0$ depending only on $\gamma$, $f$, $\Omega$  and $ u^{in}$  such that 
		\begin{equation}\label{w}
			\|u(t)\|_{L^2(\Omega)}\leq C,\qquad \text{for any } t \in [0,T_{\max}).
		\end{equation}
	\end{proposition}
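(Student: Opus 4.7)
The plan is to run an $L^2$ energy estimate for $u$ and then invoke the uniform Gronwall inequality (Lemma \ref{Gronwall}) using the space-time estimates \eqref{j} and \eqref{k} already established in Lemma \ref{lemma a}. Multiplying \eqref{35} by $u$, integrating by parts, and using the no-flux boundary condition, I obtain
\begin{equation*}
\frac{1}{2}\frac{\rd}{\rd t}\int_\Omega u^2\,\rd x+\int_\Omega \gamma(v)|\nabla u|^2\,\rd x+\int_\Omega u^2 f(u)\,\rd x=-\int_\Omega u\gamma'(v)\nabla u\cdot\nabla v\,\rd x.
\end{equation*}
Since $\gamma(v)\geq \gamma_*>0$ by \eqref{rangegamma} and $|\gamma'(v)|$ is uniformly bounded (as $\gamma\in C^3$ and $v$ lies in the compact interval $[0,v^*]$ by Proposition \ref{V}), Young's inequality lets me absorb half of the dissipation and arrive at
\begin{equation*}
\frac{\rd}{\rd t}\int_\Omega u^2\,\rd x+\gamma_*\int_\Omega |\nabla u|^2\,\rd x+2\int_\Omega u^2 f(u)\,\rd x\leq C\int_\Omega u^2|\nabla v|^2\,\rd x.
\end{equation*}

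The next step is to exploit the two-dimensional geometry via the Gagliardo--Nirenberg inequality. Using H\"older to split $\int u^2|\nabla v|^2\leq\|u\|_{L^4}^2\|\nabla v\|_{L^4}^2$ and then applying the 2D Ladyzhenskaya-type estimate $\|u\|_{L^4}^2\leq C\|\nabla u\|_{L^2}\|u\|_{L^2}+C\|u\|_{L^2}^2$ (where the $L^2$ norm of $u$ will eventually be controlled since $u\in L^1$ uniformly by Lemma \ref{lemmau}), followed once more by Young's inequality, I can absorb another $\tfrac{\gamma_*}{2}\|\nabla u\|_{L^2}^2$ on the left and reach a differential inequality of the form
\begin{equation*}
\frac{\rd y}{\rd t}\leq g(t)\,y(t),\qquad y(t):=\int_\Omega u^2(t,\cdot)\,\rd x,\qquad g(t):=C\bigl(\|\nabla v\|_{L^4(\Omega)}^4+\|\nabla v\|_{L^4(\Omega)}^2+1\bigr),
\end{equation*}
where the harmless nonnegative contribution from $u^2f(u)$ has just been dropped.

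To close the argument, I verify the three integrability conditions required by Lemma \ref{Gronwall} on any window $[t,t+\tau]$ with $\tau\in(0,\min\{1,T_{\max}/2\})$: the integral of $g$ is controlled by \eqref{k} together with Cauchy--Schwarz; the ``$h$'' term is zero; and $\int_t^{t+\tau}y(s)\,\rd s$ is bounded by \eqref{j}. Applying Lemma \ref{Gronwall} then yields a uniform bound $y(t+\tau)\leq C$ for every $t\in(0,T_{\max}-\tau)$, while for $t\in[0,\tau]$ the $L^2$-norm is controlled by continuity and the $W^{1,\infty}$-regularity of $u^{in}$. Combining the two ranges gives \eqref{w}.

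The main delicacy is the balance in the cross term $\int u\gamma'(v)\nabla u\cdot\nabla v$: it must be split so that the highest-order piece ($\|\nabla u\|_{L^2}^2$) is absorbed into the diffusion while the remainder is controlled by the \emph{integrable} weight $\|\nabla v\|_{L^4}^4$ rather than a pointwise-in-time quantity. This is precisely why \eqref{k} was derived before attempting the $L^2$-estimate, and it is what makes the strategy specific to $N=2$: in higher dimensions the Gagliardo--Nirenberg exponents no longer match and an additional argument (carried out in Section 5 of the paper) is needed.
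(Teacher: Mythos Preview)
Your strategy coincides with the paper's: an $L^2$ energy identity for $u$, Young's inequality to absorb half of the diffusion, the 2D Gagliardo--Nirenberg inequality to control $\int u^2|\nabla v|^2$, and finally the uniform Gronwall inequality (Lemma~\ref{Gronwall}) fed by the space-time bounds \eqref{j} and \eqref{k}. The only cosmetic difference is that the paper first converts $\|\nabla v\|_{L^4}$ into $\|u\|_{L^2}$ via \eqref{11}, arriving at $y'\leq Cy^2+C$ and applying Lemma~\ref{Gronwall} with $g=Cy$, whereas you keep $\|\nabla v\|_{L^4}$ as the coefficient and set $g=C(\|\nabla v\|_{L^4}^4+\|\nabla v\|_{L^4}^2+1)$; both choices close using \eqref{j} and \eqref{k}.

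One genuine slip: the contribution $\int_\Omega u^2 f(u)\,\rd x$ is \emph{not} nonnegative, since assumption \eqref{f1} only forces $f(s)\to\infty$ as $s\to\infty$ and permits $f<0$ on a bounded interval (e.g.\ $f(s)=\log(1+s)-\mu$ with $\mu>0$). Hence you may not simply ``drop'' it and declare $h\equiv 0$. The paper handles this via Lemma~\ref{uf(u)} with $\alpha=2$, $\beta=0$, which gives $u^2 f(u)\geq a_1 u^2-b_1$; discarding the term then costs an additive constant $2b_1|\Omega|$ on the right-hand side, so the Gronwall step runs with a constant $h$ rather than $h=0$. This is a trivial repair and does not affect the rest of your argument.
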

	
	\begin{proof}
		Multiplying \eqref{35} by $u $ and integrating the resultant over $\Omega$ yields that
		\begin{equation*}
			\frac{1}{2}\frac{\rd}{\rd t}\int_\Omega u^2\;\rd x
			=\int_\Omega u u_t \;\rd x	
			=\int_\Omega u \left(\Delta(\gamma(v)u)-uf(u)\right) \;\rd x,
		\end{equation*}
		where 
		\begin{equation*}
			\begin{aligned}
				\int_\Omega u  \Delta(\gamma(v)u)  \;\rd x
				&=- \int_\Omega \nabla u\cdot \nabla (\gamma(v)u)\;\rd x\\
				&=- \int_\Omega u  \gamma'(v)\nabla u\cdot \nabla v \;\rd x- \int_\Omega \gamma(v)|\nabla u|^2 \;\rd x.
			\end{aligned}
		\end{equation*}
		Recalling that  $v$ has an upper bound $v^*$ and $\gamma$ satisfies the condition  \eqref{gamma}, an application of  Young's inequality yields that
		\begin{equation*}
			\begin{aligned}
				- \int_\Omega u  \gamma'(v)\nabla u\cdot \nabla v \;\rd x
				&\leq \frac{1}{2}\int_\Omega \gamma(v)  |\nabla u|^2\;\rd x+\frac{1}{2}\int_\Omega \frac{|\gamma'(v)|^2}{\gamma(v)}u^2|\nabla v|^2 \;\rd x\\
				&\leq\frac{1}{2}\int_\Omega \gamma(v)  |\nabla u|^2\;\rd x+ K_\gamma \int_\Omega u^2|\nabla v|^2 \;\rd x,
			\end{aligned}
		\end{equation*}
		where $K_\gamma:=\max_{s\in [0,v^*]}  \left\{\frac{|\gamma'(s)|^2}{2\gamma(s)}\right\}$. 
		It follows from \eqref{rangegamma} and the above that
		\begin{equation} \label{9}
			\frac{\rd}{\rd t}\int_\Omega u^2\;\rd x+2\int_\Omega u^2f(u) \;\rd x+ \gamma_*\int_\Omega\left|\nabla u \right|^2 \;\rd x
			\leq 2K_\gamma\int_\Omega u^2|\nabla v|^2 \;\rd x.
		\end{equation}
		By the two-dimensional Gagliardo-Nirenberg inequality and \eqref{g}, we infer that there exists  a constant $C>0$ such that 
		\begin{equation}\label{10}
			\begin{aligned}
				\|u\|_{L^4(\Omega)}&\leq C\left( \|\nabla u\|^{1/2}_{L^2(\Omega)}\|  u\|^{1/2}_{L^2(\Omega)} +\|u\|_{L^1(\Omega)}\right)\\
				&\leq C\left( \|\nabla u\|^{1/2}_{L^2(\Omega)}\|  u\|^{1/2}_{L^2(\Omega)} +1 \right).
			\end{aligned}
		\end{equation}
		Thanks to  H\"older's inequality, \eqref{10}, \eqref{11} and  Young's inequality, we  deduce that for any $\varepsilon>0$, there holds
		\begin{equation*} 
			\begin{aligned}
				\int_\Omega u^2|\nabla v|^2 \;\rd x&\leq  \|u\|_{L^4(\Omega)}^2\|\nabla v\|_{L^4(\Omega)}^2 \\
				&\leq C\left( \|\nabla u\|_{L^2}^{1/2} \|u\|_{L^2(\Omega)}^{1/2} +1 \right)^2\left( \|u\|_{L^2(\Omega)}^{1/2}+1 \right)^2\\
				&\leq C\left( \|\nabla u\|_{L^2(\Omega)} \|u\|_{L^2(\Omega)} +1 \right) \left( \|u\|_{L^2(\Omega)}+1 \right) \\
				&\leq C\left( \|\nabla u\|_{L^2(\Omega)} \|u\|_{L^2(\Omega)}^2+\|\nabla u\|_{L^2(\Omega)} \|u\|_{L^2(\Omega)}+ \|u\|_{L^2(\Omega)}+1 \right)\\
				&\leq \varepsilon \|\nabla u\|_{L^2(\Omega)}^2+C(\varepsilon)  \left( \|u\|_{L^2(\Omega)}^4+1\right).
			\end{aligned}
		\end{equation*}
		Thus,   
		\begin{equation*}\label{12}
			\begin{aligned}
				\frac{\rd}{\rd t}\int_\Omega u^2\;\rd x +2\int_\Omega u^2f(u) \;\rd x+ \gamma_*\int_\Omega\left|\nabla u \right|^2 \;\rd x
				&\leq  2K_\gamma\int_\Omega u^2|\nabla v|^2 \;\rd x\\
				&\leq 2K_\gamma\varepsilon \|\nabla u\|_{L^2(\Omega)}^2+2K_\gamma C(\varepsilon)\left( \|u\|_{L^2(\Omega)}^4+1\right).\\
			\end{aligned}
		\end{equation*}
		Recall  that $u^2f(u)$ has a lower bound $-b_1\leq 0$ corresponding to  parameters $\alpha=2,\beta=0$ and  $a_1$=1 in \eqref{28}. Choosing $2K_\gamma\varepsilon=\gamma_*/2$, we obtain that
		\begin{equation*}
			\frac{\rd}{\rd t}\int_\Omega u^2\;\rd x\leq C\|u\|_{L^2(\Omega)}^4+C=C\|u\|_{L^2(\Omega)}^2\int_\Omega u^2 \;\rd x+C.
		\end{equation*}
		By applying the uniform Gronwall inequality  Lemma \ref{Gronwall} and using \eqref{j}, we conclude that  there exists a constant $C>0$ depending on $\gamma$, $\Omega$, $u^{in}$ and $f$, such that for any $t \in [\tau,T_{\max})$, 
		\begin{equation*}
			\|u\|_{L^2(\Omega)}\leq C
		\end{equation*}and its boundedness on $[0,\tau]$ follows from the local existence result Theorem \ref{local}. This completes the proof.
	\end{proof}
	
	\noindent\textbf{Proof of Theorem \ref{thm1} in 2D.} Once we have established \eqref{w},   we can obtain   by Moser-Alikakos
	iteration  that $ \sup_{t\in [0, T_{\max})} \|u(t)\|_{L^\infty(\Omega)}\leq C$ with a time-independent positive constant $C$ (see, e.g., \cite[Lemma A.1]{TW2012}). Then by Theorem \ref{local}  $T_{\max}=\infty$, and this completes the proof.\qed

	\section{Time-independent upper bounds for $u$ in 3D}
	To get the uniform boundedness of $u$ in 3D, we first derive a  uniform-in-time entropy estimate involving $\int_\Omega u\log u$. Next,  we prove the H\"older continuity of $v$ by establishing a local energy estimate for $v$ following the idea in \cite{LSU1968}. Subsequently, we derive higher-order estimates for $v$ by semigroup theories and finally, we use standard energy method to derive uniform-in-time $L^q$-estimate of $u$ with any $q>\frac{3}{2}$.

	\subsection{Uniform in-time entropy estimate}
	With the help of the uniform-in-time  boundedness of $v$ and  Lemma \ref{lemmau} and Lemma \ref{lemma a},  we can  derive following  entropy estimate.
	\begin{lemma}\label{ulogu}
		Assume that $N\geq 1$  and $(u,v)$ represents the classical solution of problem \eqref{0.1} on $[0,T_{\max}) \times \Omega $. Then, there exists a constant $C>0$  depending on   $\gamma$, $f$, $\Omega$ and $u^{in}$  such that for any $t \in [0,T_{\max})$, there holds
		\begin{equation}\label{n}
			\int_\Omega u\log u \;\rd x\leq C.
		\end{equation}
		
	\end{lemma}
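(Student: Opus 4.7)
The plan is to compute the time derivative of the entropy $\int_\Omega u\log u\,\rd x$, identify a dissipation structure strong enough to absorb a cross term, estimate the remaining forcing via the time-averaged bounds on $\|u\|_{L^2}$ and $\|\nabla v\|_{L^4}$ already established in Lemma \ref{lemma a}, and finally close the argument via the uniform Gronwall inequality of Lemma \ref{Gronwall}.

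First I would multiply \eqref{35} by $\log u + 1$ and integrate over $\Omega$. The no-flux boundary condition in \eqref{37} kills the boundary term produced by the Laplace part, and integration by parts yields the dissipation $-\int_\Omega \gamma(v)|\nabla u|^2/u\,\rd x$, a cross term $-\int_\Omega \gamma'(v)\nabla u\cdot\nabla v\,\rd x$, and the degradation term $-\int_\Omega(\log u + 1)uf(u)\,\rd x$. Because Proposition \ref{V} places $v$ in the compact interval $[0,v^*]$ and $\gamma$ is a strictly positive $C^3$ function on that interval, $\gamma(v)$ is bounded below by $\gamma_*>0$ while $|\gamma'(v)|^2/\gamma(v)$ is uniformly bounded (cf.\ \eqref{rangegamma}); a weighted Young inequality therefore absorbs the cross term against half of the dissipation and leaves a contribution of the form $C\int_\Omega u|\nabla v|^2\,\rd x$. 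The quantity $(\log u + 1)uf(u)$ is bounded below by a constant depending only on $f$ (by continuity near zero and by $f(u)\ge 0$ eventually, together with $|s\log s|\le e^{-1}$ on $[0,1]$), so it may simply be discarded.

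At this point the inequality reduces to
\begin{equation*}
\frac{\rd}{\rd t}\int_\Omega u\log u\,\rd x \le C\int_\Omega u|\nabla v|^2\,\rd x + C,
\end{equation*}
and H\"older's and Young's inequalities yield
\begin{equation*}
\int_\Omega u|\nabla v|^2\,\rd x \le \|u\|_{L^2(\Omega)}\|\nabla v\|_{L^4(\Omega)}^2 \le \frac{1}{2}\|u\|_{L^2(\Omega)}^2 + \frac{1}{2}\|\nabla v\|_{L^4(\Omega)}^4,
\end{equation*}
so that on any window $[t,t+\tau]$ with $\tau\in(0,\min\{1,T_{\max}/2\})$, the right-hand side integrates to a uniform constant thanks to \eqref{j} and \eqref{k}. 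To invoke Lemma \ref{Gronwall} with $g\equiv 0$ I also need a uniform time-averaged bound on the entropy itself; this is immediate from the elementary inequalities $s\log s\le s^2$ on $\{s\ge 1\}$ and $s\log s\ge -e^{-1}$ on $[0,1]$, which combined with \eqref{j} provide $\int_t^{t+\tau}\int_\Omega u\log u\,\rd x\,\rd s\le C$. Applying the uniform Gronwall inequality to the shifted non-negative quantity $y(t):=\int_\Omega u\log u\,\rd x + |\Omega|e^{-1}+1$ then produces the claimed bound for $t\ge\tau$, while the bound on the initial slice $[0,\tau]$ follows from the smoothness of the local solution provided by Theorem \ref{local}.

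The main technical point will be the correct balance in handling the cross term $-\int_\Omega \gamma'(v)\nabla u\cdot\nabla v\,\rd x$: one cannot spend all of the dissipation on absorbing it, since $\int_\Omega u|\nabla v|^2\,\rd x$ is only integrable in time, not pointwise bounded. The uniform lower bound $\gamma(v)\ge \gamma_*>0$ from \eqref{rangegamma} is exactly what makes the Young absorption quantitatively harmless. I would avoid rewriting the cross term through $\Delta v=v-u$, since that alternative route would introduce an $\int_\Omega u^2\gamma'(v)\,\rd x$ whose sign depends on the monotonicity of $\gamma$, which is not prescribed under the general assumption \eqref{gamma}.
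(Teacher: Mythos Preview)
Your proposal is correct and follows essentially the same route as the paper: multiply the $u$-equation by $\log u$ (the paper uses $\log u$ rather than $\log u+1$, yielding $\frac{\rd}{\rd t}\int_\Omega(u\log u-u)\,\rd x$, but since $\int_\Omega u\,\rd x$ is already bounded this is immaterial), absorb the cross term $-\int_\Omega \gamma'(v)\nabla u\cdot\nabla v\,\rd x$ by Young's inequality against the Fisher dissipation, bound the residual $\int_\Omega u|\nabla v|^2\,\rd x$ by $\|u\|_{L^2}^2+\|\nabla v\|_{L^4}^4$, and close via the uniform Gronwall inequality using the time-averaged bounds \eqref{j}--\eqref{k} together with $s\log s\le s^2$. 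Your remark that one ``cannot spend all of the dissipation'' is unnecessary here---the paper in fact absorbs the entire Fisher term and simply discards whatever remains, since the dissipation plays no further role in this lemma---but this does not affect the correctness of your argument.
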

	\begin{proof}
		Multiplying  \eqref{35} by $\log u$ and integrating over $\Omega$, we obtain that
		\begin{equation*}\label{o}
			\frac{\rd}{\rd t}\left(\int_\Omega u\log u \;\rd x-\int_\Omega u \;\rd x\right)=\int_\Omega \Delta(\gamma(v)u)\log u\;\rd x-\int_\Omega u f(u)\log u \;\rd x.
		\end{equation*}
		By integrating by parts and employing Young's inequality, we infer that
		\begin{equation*}
			\begin{aligned}
				\int_\Omega \Delta(\gamma(v)u)\log u \;\rd x
				&=-\int_\Omega \nabla\left(\gamma(v)u\right)\cdot \nabla \log u \;\rd x\\
				&=-\int_\Omega\gamma'(v)\nabla u \cdot \nabla v\;\rd x-\int_\Omega \gamma(v)\frac{|\nabla u|^2}{u}\;\rd x\\
				&\leq\int_\Omega  \frac{|\gamma'(v)|^2}{\gamma(v)} u |\nabla v|^2  \;\rd x+ \int_\Omega  \gamma(v)\frac{|\nabla  u|^2}{u} \;\rd x -\int_\Omega \gamma(v)\frac{|\nabla u|^2}{u}\;\rd x.
			\end{aligned}
		\end{equation*}
		Therefore,  
		\begin{equation*}
			\frac{\rd}{\rd t} \left(\int_\Omega u\log u \;\rd x-\int_\Omega u \;\rd x\right)+\int_\Omega u f(u)\log u \;\rd x  \leq \int_\Omega  \frac{|\gamma'(v)|^2}{\gamma(v)} u |\nabla v|^2  \;\rd x  \leq C\int_\Omega |\nabla v|^4  \;\rd x+ C\int_\Omega u^2 \;\rd x,
		\end{equation*}
		where $C>0$ depending on $\gamma$, $f$, $u^{in}$ and $\Omega$.
		Taking $\alpha=1,\beta=1$ and $a_1=1$ in \eqref{28}, we  know that $u f(u)\log u$ has lower bound $-b_1\leq 0$. Therefore, we have 
		\begin{equation*}\label{s}
			\frac{\rd}{\rd t}\left(\int_\Omega u\log u \;\rd x-\int_\Omega u \;\rd x\right)  \leq C\int_\Omega |\nabla v|^4 \;\rd x + C\int_\Omega u^2\;\rd x+b_1|\Omega|.
		\end{equation*}
		Notice that
		$$-e\leq s\log s-s \leq  s^2,\qquad \text{for any } s \geq 0.$$
		With the aid of Lemma \ref{lemma a}, we know that for any $t\in (0,T_{\max}-\tau)$ with  $0<\tau<\min\{1,\frac{T_{\max}}{2}\}$, there holds
		\begin{equation}\label{45}
			\int_t^{t+\tau}\int_\Omega (u\log u-u)\;\rd x\;\rd s\leq \int_t^{t+\tau}\int_\Omega u^2 \;\rd x\;\rd s\leq C
		\end{equation}
		with $C>0$ depending on $\gamma$, $f$, $\Omega$ and $u^{in}$.  Invoking the uniform Gronwall inequality Lemma \ref{Gronwall}, \eqref{k}, Lemma \ref{lemmau}, Lemma \ref{lemma a} together with the local existence result Theorem \ref{local},  we arrive at 
		$$\int_\Omega u\log u \;\rd x \leq C,~~~~~~\text{~for~any~~} t\in [0,T_{\max}) $$
		with $C>0$ depending on $\gamma$, $f$, $\Omega$ and $u^{in}$. This completes the proof.
	\end{proof}	
	\begin{remark}\label{uf}
		Let $N\geq 1$. Assume that $f(\cdot)$ satisfies   \eqref{f1} and \eqref{f2}. Then, there exists a constant $C>0$ depending on $\gamma$, $f$, $\Omega$ and $u^{in}$ such that
		\begin{equation*} 
			\int_\Omega u f(u) \;\rd x\leq C, \;\;\text{for any}\;\; t\in [0,T_{\max}).
		\end{equation*}
	\end{remark}
	
	\begin{proof}
		It follows from \eqref{29} and Lemma \ref{ulogu} that
		\begin{equation*}
			\int_\Omega uf(u) \;\rd x
			\leq \int_\Omega \left( a_2 u\log u+b_2\right)\;\rd x
			= a_2  \int_\Omega u\log u \;\rd x+b_2|\Omega| 
			\leq C,
		\end{equation*}
		where   $C>0$ depending on $\gamma$, $f$, $\Omega$ and $u^{in}$.
	\end{proof}
	
	\subsection{H\"older Continuity of $v$}
	Based on the $L^1$-estimate of the nonlinear term $uf(u)$ in Remark \ref{uf}, we  prove the H\"older continuity of $v$ by establishing a local energy estimate as done in \cite{JiLa2021}.	In the subsequent parts, we fix  $T \in (0,T_{\max})$, set $J_T=[0,T]$, and let \begin{equation}\label{phi}
		\varphi: =\mathcal{A}^{-1}[u\gamma(v)-uf(u)].
	\end{equation} 
	\begin{lemma}\label{vholder}
		Let $\delta \in (0,1)$. There exists $ C>0$  depending only on $\gamma$, $f$, $u^{in}$ and $\Omega$ such that, if $\vartheta \in C^\infty (J_T\times \bar{\Omega})$, $ 0\leq \vartheta \leq 1,~\sigma\in \left\{-1,1 \right\}, $ and $h\in \mathbb{R}$ are such that  
		\begin{equation}\label{1}
			\sigma v(t,x)-h\leq \delta,~~~~(t,x)\in \mathrm{supp}~ \vartheta,
		\end{equation} 
		then 
		\begin{multline*}
			\int_\Omega \vartheta^2(\sigma v(t)-h)_+^2 \;\rd x+\frac{\gamma_*}{2}\int_{t_0}^t \int _\Omega \vartheta ^2|\nabla (\sigma v(\tau) -h)_+)|^2 \;\rd x \;\rd \tau \\
			\leq \int_\Omega \vartheta ^2(\sigma v(t_0)-h)_+^2\;\rd x+C\int _{t_0}^t \int _\Omega \left( |\nabla \vartheta |^2+\vartheta |\vartheta _t|\right)(\sigma v(\tau )-h)_+^2 \;\rd x \;\rd \tau+C\int _{t_0}^t \left(\int _{A_{h,\sigma}(\tau)} \vartheta \;\rd x \right)^{\frac{1}{2}}\;\rd \tau
		\end{multline*}
		for $0\leq t_0 \leq t\leq T$, 
		where
		$$A_{h,\sigma}(\tau)\triangleq\left\{ x\in \Omega :~\sigma v(\tau ,x)>h\right\},~~~~\tau \in [0,T].$$
	\end{lemma}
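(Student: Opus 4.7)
The plan is to derive the local energy inequality by a De Giorgi--Ladyzhenskaya test applied to the parabolic form of the $v$-equation, in the spirit of the local energy estimates cited from \cite{LSU1968, JiLa2021}. First I would eliminate $u$ from the key identity \eqref{k-i} via the elliptic relation $u = v - \Delta v$ from \eqref{36} to rewrite the equation for $v$ as
\begin{equation*}
v_\tau - \gamma(v) \Delta v + \gamma(v) v = \varphi, \qquad (\tau, x) \in (0, T_{\max}) \times \Omega,
\end{equation*}
with $\varphi$ as in \eqref{phi}. I would then test this identity against the non-negative cutoff $\sigma \vartheta^2 (\sigma v - h)_+$ and integrate over $\Omega \times [t_0, t]$. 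The pointwise identities $\sigma (\partial_\tau v)(\sigma v - h)_+ = \tfrac{1}{2} \partial_\tau (\sigma v - h)_+^2$ and $\sigma \nabla v \cdot \mathbf{1}_{\{\sigma v > h\}} = \nabla (\sigma v - h)_+$ (valid for either sign of $\sigma$) convert the time and gradient contributions into manifestly quadratic expressions in $(\sigma v - h)_+$.

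The time derivative will produce $\tfrac{1}{2}[\int_\Omega \vartheta^2 (\sigma v - h)_+^2 \, dx]_{t_0}^t$ plus a remainder $-\int \vartheta \vartheta_\tau (\sigma v - h)_+^2$, directly matching two pieces of the target inequality. Integration by parts of the diffusion term using Neumann boundary conditions will then yield three contributions: a leading quadratic gradient term $\int \vartheta^2 \gamma(v) |\nabla(\sigma v - h)_+|^2$, bounded below by $\gamma_* \int \vartheta^2 |\nabla(\sigma v - h)_+|^2$ thanks to \eqref{rangegamma}; a cross term $2\int \vartheta \gamma(v)(\sigma v - h)_+ \nabla\vartheta \cdot \nabla(\sigma v - h)_+$, which I would split by Young's inequality into a fraction of the leading gradient piece plus $C\int |\nabla\vartheta|^2 (\sigma v - h)_+^2$; and a sign-indefinite quasilinear term $\sigma\int \vartheta^2 \gamma'(v)(\sigma v - h)_+ |\nabla(\sigma v - h)_+|^2$. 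The lower-order contribution $\sigma \gamma(v) v (\sigma v - h)_+$ is non-negative for $\sigma = 1$ and can simply be dropped, while for $\sigma = -1$ it is dominated pointwise by $\gamma^* v^* \delta \cdot \mathbf{1}_{A_{h,\sigma}}$ using the $L^\infty$ bounds on $v$ and $\gamma$.

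For the forcing, I would first establish a uniform $L^2$ bound on $\varphi$. Lemma~\ref{lemmau} gives $\|u\gamma(v)\|_{L^1} \leq \gamma^* \|u\|_{L^1} \leq C$, and Remark~\ref{uf} combined with the one-sided lower bound $uf(u) \geq -b_1$ from Lemma~\ref{uf(u)} gives $\|uf(u)\|_{L^1} \leq C$; hence $\|u\gamma(v) - uf(u)\|_{L^1(\Omega)} \leq C$ uniformly in time. Invoking the Helmholtz regularity lemma from Section~2 with $q = 2$ (admissible since $N \leq 3$) then yields $\|\varphi(\tau)\|_{L^2(\Omega)} \leq C$. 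Cauchy--Schwarz together with $(\sigma v - h)_+ \leq \delta$ and $\vartheta^4 \leq \vartheta \leq 1$ on $\mathrm{supp}\,\vartheta$ will produce
\begin{equation*}
\left| \int_\Omega \sigma \vartheta^2 (\sigma v - h)_+ \varphi \, dx \right| \leq \|\varphi\|_{L^2} \Big( \int \vartheta^4 (\sigma v - h)_+^2 \, dx \Big)^{1/2} \leq C \delta \Big( \int_{A_{h,\sigma}} \vartheta \, dx \Big)^{1/2},
\end{equation*}
and the elementary estimate $\int_A \vartheta \leq |\Omega|^{1/2} (\int_A \vartheta)^{1/2}$ controls the $\sigma \gamma(v) v (\sigma v - h)_+$ contribution in the same form, which after time integration produces the final term of the desired inequality.

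The hard part will be the sign-indefinite quasilinear term $\sigma \int \vartheta^2 \gamma'(v)(\sigma v - h)_+ |\nabla(\sigma v - h)_+|^2$, which has no favourable sign and threatens the coercivity of the gradient piece. Using the uniform bound $v \leq v^*$ from Proposition~\ref{V}, $|\gamma'(v)|$ is bounded on $[0, v^*]$ by some $L > 0$, and the hypothesis $(\sigma v - h)_+ \leq \delta$ on $\mathrm{supp}\,\vartheta$ dominates this term by $L\delta \int \vartheta^2 |\nabla(\sigma v - h)_+|^2$. This, together with the Young contribution from the cross term, must be absorbed into half of $\gamma_* \int \vartheta^2 |\nabla(\sigma v - h)_+|^2$; the required smallness of $\delta$ relative to $\gamma_*/L$ is absorbed into $C$'s dependence on $\gamma$. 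Once this absorption is performed, integrating over $[t_0, t]$ and collecting the terms produces the claimed inequality.
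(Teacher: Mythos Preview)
Your argument has a genuine gap at the absorption of the quasilinear term. After rewriting the equation as $v_\tau-\gamma(v)\Delta v+\gamma(v)v=\varphi$ and integrating by parts, you correctly isolate the contribution
\[
\sigma\int_\Omega \vartheta^2\gamma'(v)(\sigma v-h)_+|\nabla(\sigma v-h)_+|^2\,\rd x,
\]
and bound it by $L\delta\int_\Omega\vartheta^2|\nabla(\sigma v-h)_+|^2\,\rd x$ with $L:=\sup_{[0,v^*]}|\gamma'|$. Absorbing this into the coercive piece requires $L\delta<\gamma_*/2$. But the lemma is stated for \emph{every} $\delta\in(0,1)$, with $C$ depending only on $\gamma,f,u^{in},\Omega$ and not on $\delta$; when $\delta$ is close to $1$ and $L\geq\gamma_*/2$ (which nothing in the hypotheses excludes), the quasilinear term destroys the coercivity and no choice of $C$ can repair this. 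Your remark that ``the required smallness of $\delta$ \ldots\ is absorbed into $C$'s dependence on $\gamma$'' is therefore not correct: the issue is not the size of a constant but the sign of a term.

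The paper sidesteps this difficulty by reversing the order of your two steps. It tests the key identity \eqref{k-i} directly, \emph{without} first eliminating $u$, so that the relevant term is $-\sigma\int_\Omega\vartheta^2(\sigma v-h)_+u\gamma(v)\,\rd x$. The sign of the integrand is known (nonpositive if $\sigma=1$, nonnegative if $\sigma=-1$), which allows $\gamma(v)$ to be replaced by a \emph{constant}: by $\gamma_*$ in the first case and by $\gamma^*$ in the second. Only after this replacement does one substitute $u=v-\Delta v$ and integrate by parts; since the diffusion coefficient is now constant, no $\gamma'(v)$ contribution ever appears. The resulting cross term $2\gamma^*\int\vartheta|\nabla\vartheta|(\sigma v-h)_+|\nabla(\sigma v-h)_+|$ is handled by Young's inequality exactly as you describe, and your treatment of the lower-order term $\gamma(v)v$, of the forcing $\varphi$ via $\|\varphi\|_{L^2}\leq C$, and of the time-derivative term is the same as in the paper.
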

	\begin{proof}
		Noticing that $v$ satisfies the key identity \eqref{k-i}, we have
		\begin{equation*}
			\begin{aligned}
				\frac{1}{2}\frac{\rd }{\rd t}\int_\Omega \vartheta^2 (\sigma v-h)_{+}^2\;\rd x
				&= \sigma \int_\Omega \vartheta ^2(\sigma v-h)_+v_t \;\rd x+\int_\Omega  \vartheta \vartheta_t(\sigma v-h)^2_+\;\rd x\\
				&=-\sigma \int_\Omega \vartheta ^2(\sigma v-h)_+u\gamma(v) \;\rd x+\sigma \int_\Omega \vartheta ^2(\sigma v-h)_+\varphi \;\rd x+\int_\Omega  \vartheta \vartheta_t(\sigma v-h)^2_+\;\rd x,
			\end{aligned}		
		\end{equation*}
		where $\varphi$ is defined in \eqref{phi}.
		Either $\sigma =1$ and it follows from \eqref{36}, \eqref{gamma} and the non-negativity of $u\gamma(v)$ and $v$ that
		\begin{equation*}
			\begin{aligned}
				-\sigma \int_\Omega \vartheta ^2(\sigma v-h)_+u\gamma(v) \;\rd x
				&\leq 	-\gamma_* \int_\Omega \vartheta ^2( v-h)_+u \;\rd x\\
				&= -\gamma_*\int_\Omega \vartheta ^2( v-h)_+(v-\Delta v) \;\rd x \\
				&\leq -\gamma_*\int_\Omega \nabla \left(\vartheta ^2(  v-h)_+ \right)\cdot \nabla v \;\rd x\\
				&\leq -\gamma_*\int_\Omega \vartheta^2 |\nabla(v-h)_+|^2 \;\rd x +2\gamma_*\int _\Omega \vartheta|\nabla \vartheta|(v-h)_+|\nabla (v-h)_+| \;\rd x.
			\end{aligned}
		\end{equation*}
		Or $\sigma =-1$ and we infer from \eqref{36}, \eqref{gamma} and \eqref{rangev} that
		\begin{equation*}
			\begin{aligned}
				-\sigma \int_\Omega \vartheta ^2(\sigma v-h)_+u\gamma(v) \;\rd x
				&\leq\gamma^*\int_\Omega \vartheta ^2(-v-h)_+u \;\rd x\\
				&= \gamma^*\int_\Omega \vartheta ^2(-v-h)_+(v-\Delta v) \;\rd x\\
				&\leq \gamma^* v^*\int_\Omega \vartheta^2(-v-h)_+\;\rd x+\gamma^* \int_\Omega \nabla \left(\vartheta ^2(- v-h)_+ \right)\cdot \nabla v \;\rd x\\
				&\leq \gamma^* v^*\int_\Omega \vartheta ^2(-v-h)_+ \;\rd x+2\gamma^*\int _\Omega \vartheta|\nabla \vartheta|(-v-h)_+ |\nabla (-v-h)_+|\;\rd x\\
				&~~~~~-\gamma^*\int_\Omega \vartheta^2 |\nabla(-v-h)_+|^2\;\rd x.\\
			\end{aligned}
		\end{equation*}
		Since $\gamma_* \leq \gamma^*$,  one obtains the following estimate
		\begin{equation*}
			\begin{aligned}
				-\sigma \int_\Omega \vartheta ^2(\sigma v-h)_+u\gamma(v) \;\rd x
				&\leq -\gamma_*\int_\Omega  \vartheta^2 |\nabla(\sigma v-h)_+|^2 \;\rd x+2\gamma^*\int _\Omega \vartheta|\nabla \vartheta| (\sigma v-h)_+ |\nabla (\sigma v-h)_+|  \;\rd x\\
				&~~~~+\gamma^*v^*\int_\Omega \vartheta ^2(\sigma v-h)_+  \;\rd x.
			\end{aligned}
		\end{equation*}
		Invoking  Young's inequality, 
		\begin{equation*}
			\begin{aligned}
				\int _\Omega \vartheta|\nabla \vartheta| (\sigma v-h)_+ |\nabla (\sigma v-h)_+|  \;\rd x
				&\leq \frac{1}{4}\int_\Omega \vartheta^2 |\nabla (\sigma v-h)_+|^2 \;\rd x+ \int_\Omega |\nabla \vartheta |^2(\sigma v-h)^2_+\;\rd x.
			\end{aligned}
		\end{equation*}
		Therefore, we arrive at
		\begin{equation*}
			\begin{aligned}
				\frac{1}{2}\frac{\rd }{\rd t}\int_\Omega \vartheta^2 (\sigma v-h)_{+}^2\;\rd x
				&\leq -\gamma_*\int_\Omega  \vartheta^2 |\nabla(-v-h)_+|^2 \;\rd x+\frac{\gamma_*}{2}\int_\Omega \vartheta^2 |\nabla (\sigma v-h)_+|^2 \;\rd x\\
				&\qquad+2\gamma^*\int_\Omega |\nabla \vartheta |^2(\sigma v-h)^2_+\;\rd x +\gamma^*v^*\int_\Omega \vartheta ^2(\sigma v-h)_+  \;\rd x\\
				&\qquad +\sigma \int_\Omega \vartheta ^2(\sigma v-h)_+\varphi \;\rd x+\int_\Omega  \vartheta \vartheta_t(\sigma v-h)^2_+\;\rd x\\
				&\leq -\frac{\gamma_*}{2}\int_\Omega \vartheta^2 |\nabla (\sigma v-h)_+|^2 \;\rd x+C\int_\Omega |\nabla \vartheta |^2(\sigma v-h)^2_+\;\rd x \\ &\qquad+C\int_\Omega \vartheta ^2(\sigma v-h)_+  \;\rd x
				+\int_\Omega \vartheta ^2(\sigma v-h)_+|\varphi| \;\rd x+\int_\Omega  \vartheta \vartheta_t(\sigma v-h)^2_+\;\rd x
			\end{aligned} 
		\end{equation*} 
		with $C>0$ depending on $f$, $\Omega$, $\gamma$ and $u^{in}$,	which implies
		\begin{equation}\label{2}
			\begin{aligned}
				\frac{1}{2}\frac{\rd }{\rd t}&\int_\Omega \vartheta^2 (\sigma v-h)_{+}^2\;\rd x+\frac{\gamma_*}{2}\int_\Omega \vartheta^2 |\nabla (\sigma v-h)_+|^2 \;\rd x\\
				&\leq C\int_\Omega(|\nabla \vartheta |^2+\vartheta |\vartheta_t|) (\sigma v-h)^2_+\;\rd x  
				+C\int_\Omega \vartheta ^2(\sigma v-h)_+(|\varphi| +1)\;\rd x.
			\end{aligned}
		\end{equation}
		Since
		\begin{equation*}
			\begin{aligned}
				\int_\Omega \vartheta ^2(\sigma v-h)_+(|\varphi| +1)\;\rd x\leq  \|\vartheta^2(\sigma v-h)_+\|_{L^2(\Omega)} \| |\varphi| +1 \|_{L^2(\Omega)},
			\end{aligned}
		\end{equation*}
		and recalling that $0<\delta <1$ and  $0 \leq \vartheta \leq 1$, we infer from  \eqref{1} that
		\begin{equation*} 
			\|\vartheta^2(\sigma v-h)_+\|_{L^2(\Omega)}
			=	\left( \int_\Omega \vartheta ^4 (\sigma v-h)_+^2 \;\rd x\right)^{\frac{1}{2}} 
			\leq \delta \left( \int_{A_{h,\vartheta} (\tau)} \vartheta ^4\;\rd x \right)^\frac{1}{2} 
			\leq   \left( \int_{A_{h,\vartheta }(\tau)} \vartheta \;\rd x \right)^\frac{1}{2}. 
		\end{equation*}
		On the other hand,   we deduce from \eqref{phi},  Lemma \ref{0} and Lemma \ref{ulogu} that
		\begin{equation*}
			\begin{aligned}
				\| |\varphi | +1\|_{L^2(\Omega)}
				&\leq \|\mathcal{A}^{-1}\left[u\gamma(v) \right]+1\|_{L^2(\Omega)}+\|\mathcal{A}^{-1}[uf(u)]\|_{L^2{(\Omega})}\\
				&\leq (\gamma^* v^*+1)|\Omega|^\frac{1}{2}+C\|uf(u)\|_{L^1(\Omega)}\\
				&\leq C_3.
			\end{aligned}
		\end{equation*}
		Here, $C_3>0$ depending on $\Omega$, $\gamma$, $u^{in}$ and $f$. Therefore, 
		\begin{equation*} 
			\begin{aligned}
				\int_\Omega \vartheta ^2(\sigma v-h)_+(|\varphi| +1)\;\rd x\leq C\left( \int_{A_{h,\vartheta}(\tau)} \vartheta \;\rd x \right)^\frac{1}{2}.\\
			\end{aligned}
		\end{equation*}
		Inserting the above estimate in \eqref{2} and integrating from $t_0$ to $t$ completes the proof.
	\end{proof}
	
	We are now in a position to apply \cite[Theorem 8.2]{LSU1968} to obtain a uniform-in-time H\"older estimate for $v$. 
	\begin{proposition}\label{p1}
		When $N=3$, there exists a constant $\alpha \in (0,1)$ depending only on $f$, $\Omega$, $\gamma$ and $u^{in}$  such that $v \in C^\alpha(J_T\times  \bar{\Omega}).$
	\end{proposition}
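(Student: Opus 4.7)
The plan is to invoke Theorem 8.2 of Ladyzhenskaya--Solonnikov--Ural'tseva \cite{LSU1968}, which establishes H\"older regularity for bounded functions whose level-set truncations satisfy a parabolic local energy estimate of De Giorgi type. The uniform $L^\infty$ bound on $v$ supplied by Proposition \ref{V} and the inequality just proved in Lemma \ref{vholder} are tailored precisely to fit this framework, so the task is to cast the lemma's output into the form required by the LSU class $\mathcal{B}_2$ and verify that the remainder term involving $|A_{h,\sigma}|^{1/2}$ is of lower order.

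Concretely, I would fix $(t_0,x_0) \in J_T \times \bar{\Omega}$, work on parabolic cylinders $Q_R = (t_0-R^2,t_0) \times (B_R(x_0)\cap\Omega)$ with $R$ small, and insert into Lemma \ref{vholder} a standard smooth cutoff $\vartheta$ supported in $Q_R$, equal to one on $Q_{R/2}$, and satisfying $|\nabla\vartheta|^2+\vartheta|\vartheta_t|\leq C R^{-2}$. Since $v$ is bounded by $v^*$, one may choose the truncation parameter $\delta$ arbitrarily small (in particular in $(0,1)$) uniformly in $(t_0,x_0)$, so the hypothesis \eqref{1} is satisfied whenever $h$ lies within a small distance of $\pm\|v\|_\infty$, which is exactly the regime used in the oscillation decay argument behind LSU Theorem 8.2. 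The resulting inequality reproduces the canonical De Giorgi energy estimate plus the extra term
\begin{equation*}
C\int_{t_0-R^2}^{t_0}\bigl(\textstyle\int_{A_{h,\sigma}(\tau)}\vartheta\,\rd x\bigr)^{1/2}\rd\tau \leq C\,R\,|A_{h,\sigma}\cap Q_R|^{1/2},
\end{equation*}
obtained by Cauchy--Schwarz in time. Since $N=3$ yields $|Q_R|\simeq R^{5}$, this bound is of the form $|A_{h,\sigma}\cap Q_R|^{1/2}\cdot R^{1+\kappa}$ for some $\kappa>0$, which is strictly stronger than what the LSU $\mathcal{B}_2$ class demands, and hence is absorbed as a lower-order perturbation in their iteration.

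With the energy inequality validated in the class $\mathcal{B}_2$, LSU Theorem 8.2 delivers an exponent $\alpha\in(0,1)$ and a H\"older seminorm depending only on $\gamma_*$, $v^*$, $|\Omega|$ and the constants appearing in Lemma \ref{vholder}; tracing these back through Lemma \ref{vholder}, Proposition \ref{V}, Lemma \ref{lemmau}, Lemma \ref{lemma a} and Lemma \ref{ulogu}, the final dependence reduces to $\gamma$, $f$, $\Omega$, and $u^{in}$ alone. The main difficulty I anticipate is on the bookkeeping side rather than the conceptual side: namely, checking that (a) the estimate of Lemma \ref{vholder} holds in the same form in a boundary cylinder $B_R(x_0)\cap\Omega$ without producing surface integrals (here the Neumann condition $\nabla v\cdot \mathbf{n}=0$ on $\partial\Omega$ and the analogous condition on the cutoff are what make the integrations by parts clean), and (b) the exponent comparison placing the perturbation term into the proper subclass of $\mathcal{B}_2$ is carried out uniformly for both signs $\sigma\in\{-1,+1\}$ so that both the supremum and the infimum of $v$ on $Q_R$ decay, which is what drives the oscillation-reduction step at the heart of LSU Theorem 8.2.
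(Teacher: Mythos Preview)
Your approach is essentially the same as the paper's: both feed the local energy inequality of Lemma~\ref{vholder} into the De~Giorgi framework of \cite[Chapter~II, \S7--8]{LSU1968} and invoke Theorem~8.2 there. The paper is slightly more precise in that it names the exact parameters $q=\tfrac{14}{3}$, $r=\tfrac{7}{3}$, $\kappa=\tfrac{1}{6}$ that verify \cite[(7.3)]{LSU1968} and place $v$ directly in the class $\hat{\mathcal{B}}_2(J_T\times\bar\Omega, v^*, C, \tfrac{7}{3}, \delta, \tfrac{1}{6})$, whereas your scaling discussion of the remainder $R\,|A_{h,\sigma}\cap Q_R|^{1/2}$ (which is what Cauchy--Schwarz actually gives, not $R^{1+\kappa}|A|^{1/2}$) would still need to be matched against the exponent structure required in \cite[(7.5)]{LSU1968}; but this is exactly the bookkeeping you anticipated and it resolves with the same choice of $(q,r,\kappa)$.
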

	\begin{proof}
		According to Lemma \ref{vholder}, the estimate in \cite [Chapter II, Equation (7.5)] {LSU1968} holds true with the parameters $q=\frac{14}{3},r=\frac{7}{3},\kappa=\frac{1}{6}$ satisfying \cite [Equation (7.3)]{LSU1968}.  Therefore, by \cite[Remark 7.2, Remark 8.1]{LSU1968}, we  conclude that  there exists a positive constant $C$ depending only on $\Omega$, $\gamma$,  and $u^{in}$  such that $v$ belongs to the function class  $\hat{\mathcal{B}}_2\left( J_T \times \bar{\Omega},v^*,C,\frac{7}{3},\delta,\frac{1}{6}\right)$  defined in \cite[Chapter II Section 8]{LSU1968}. Taking into account the smoothness of the boundary of  $\Omega$ and the H\"older continuity of $v^{in}\in C^{\alpha_0} (\bar{\Omega})$, with some	$\alpha_0 \in (0,1)$, we can  infer by \cite[Lemma 8.1 \& Theorem 8.2]{LSU1968}  that $v \in C^\alpha (J_T \times \bar{\Omega})$, where $\alpha \in (0,1)$ depending on  $f$, $\Omega$, $\gamma$ and $u^{in}$. Moreover, since the local energy estimate is time-independent, we conclude that the H\"older estimate of $v$ is uniform-in-time as well.	\end{proof}
	
	\subsection{Higher-order estimates of $v$}
	In this part, we will employ the theory of evolution operators to get  higher-order estimates of $v$.
	\begin{lemma}\label{vgaojie}
		When $N=3$, for any $p\in (1,3)$ and $\theta \in (\frac{1+p}{2p},1)$, there exists a constant $C>0$  depending on $\gamma$, $f$, $\Omega$, $u^{in}$, $p$ and $\theta$  such that 
		$$\|v(t)\|_{W^{2\theta,p}(\Omega)}\leq C,~~~~t \in [0,T_{\max}).$$
	\end{lemma}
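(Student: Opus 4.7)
The plan is to apply semigroup theory to the parabolic equation satisfied by $v$, combined with the smoothing properties of the Neumann heat semigroup generated by $-\mathcal{A}$ and the uniform estimates accumulated in the preceding lemmas.

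First, I would convert the key identity \eqref{k-i} into an inhomogeneous linear evolution equation by adding $\mathcal{A}v=u$ (from \eqref{36}) to both sides:
\begin{equation*}
v_t+\mathcal{A}v=u\bigl(1-\gamma(v)\bigr)+\mathcal{A}^{-1}\bigl[\gamma(v)u-uf(u)\bigr]\;=:\;h(t,\cdot),\qquad v(0)=v^{in}.
\end{equation*}
Noting that $v^{in}=\mathcal{A}^{-1}[u^{in}]$ lies in $W^{3,p}(\Omega)$ by \eqref{uin} and elliptic regularity, the Duhamel formula
\begin{equation*}
v(t)=e^{-t\mathcal{A}}v^{in}+\int_0^t e^{-(t-s)\mathcal{A}}h(s)\,\rd s
\end{equation*}
reduces the task to an $L^p$-estimate of $\mathcal{A}^\theta$ applied to this expression.

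Second, I would collect uniform-in-time estimates on $h$. Splitting $h=h_1+h_2$ with $h_1:=u(1-\gamma(v))$ and $h_2:=\mathcal{A}^{-1}[\gamma(v)u-uf(u)]$, the bounds $\|u(t)\|_{L^1(\Omega)}\le C$ (Lemma \ref{lemmau}), $\|v(t)\|_{L^\infty(\Omega)}\le v^*$ (Proposition \ref{V}), and $\|uf(u)(t)\|_{L^1(\Omega)}\le C$ (Remark \ref{uf}) together yield $\|h_1(t)\|_{L^1(\Omega)}\le C$; the Helmholtz-type estimate from Section 2 then gives $\|h_2(t)\|_{L^q(\Omega)}\le C$ for every $q\in(1,3)$.

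Third, I would apply the fractional power $\mathcal{A}^\theta$ to the Duhamel formula and invoke the analytic-semigroup smoothing estimate
\begin{equation*}
\|\mathcal{A}^\theta e^{-\tau\mathcal{A}}w\|_{L^p(\Omega)}\le C\,\tau^{-\theta-\frac{N}{2}(1/r-1/p)}e^{-\lambda\tau}\|w\|_{L^r(\Omega)}\qquad(r\le p),
\end{equation*}
applied to $h_1$ with $r=1$ and to $h_2$ with $r=q$ chosen close to $3$. The restriction $p<3$ in the statement arises from the requirement that the time integral against $h_1\in L^1$ be finite, while the exponential factor $e^{-\lambda\tau}$ takes care of large-time decay and yields a bound uniform in $t$. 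The lower bound $\theta>(p+1)/(2p)$ is precisely what identifies $D(\mathcal{A}^\theta)$ with the Neumann-trace subspace of $W^{2\theta,p}(\Omega)$, so that control of $\|\mathcal{A}^\theta v(t)\|_{L^p(\Omega)}$ translates into the desired $W^{2\theta,p}$-bound.

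The main obstacle is taming the weak $L^1$-integrability of $h_1=u(1-\gamma(v))$: unlike the earlier sections where the $L^\infty$ and $C^\alpha$ bounds on $v$ sufficed, here one must absorb the singular smoothing kernel $\tau^{-\theta-\frac{N}{2}(1-1/p)}$ arising when the fractional-power semigroup acts on $L^1$ data. It is exactly the improved integrability $L^q$ with $q$ approaching $3$ for $h_2$, coupled with the borderline Sobolev exponent $N/(N-2)=3$ for $\mathcal{A}^{-1}$ of $L^1$ functions in three dimensions, that accounts for the restriction $p<3$ in the statement.
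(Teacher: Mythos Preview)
Your Duhamel approach with the autonomous operator $\mathcal{A}=I-\Delta$ has a genuine gap at the $h_1$ term. With $N=3$, the $L^1\to L^p$ smoothing estimate you quote gives a kernel singularity $\tau^{-\theta-\frac{3}{2}(1-1/p)}$, and integrability near $\tau=0$ forces
\[
\theta<1-\tfrac{3}{2}\Bigl(1-\tfrac{1}{p}\Bigr)=\frac{3-p}{2p}.
\]
But the statement requires $\theta>\frac{1+p}{2p}$, and $\frac{3-p}{2p}<\frac{1+p}{2p}$ whenever $p>1$. Hence for every $p\in(1,3)$ the admissible $\theta$-interval is empty, and the time integral against $h_1=u(1-\gamma(v))\in L^\infty_tL^1_x$ diverges. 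You correctly flag this as the main obstacle, but your last paragraph does not resolve it: the $L^q$ bound with $q\nearrow 3$ only helps $h_2$, not $h_1$. Trying to upgrade $h_1$ via the space-time estimate $\int_t^{t+\tau}\|u\|_{L^2}^2\,\rd s\le C$ from Lemma~\ref{lemma a} still falls short of the required exponent range after H\"older in time.

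The paper circumvents this by \emph{not} using the autonomous operator $\mathcal{A}$. Instead it rewrites the key identity, using $u=v-\Delta v$, as
\[
v_t+\mu v-\gamma(v)\Delta v=F,\qquad F:=\mathcal{A}^{-1}\bigl[u\gamma(v)-uf(u)\bigr]-v\gamma(v)+\mu v,
\]
so that the dangerous $u\gamma(v)$ term is absorbed into the \emph{non-autonomous} principal part $-\gamma(v)\Delta$. The forcing $F$ is then uniformly in $L^p(\Omega)$ for every $p\in(1,3)$ (the first summand by the Helmholtz estimate and Remark~\ref{uf}, the others by $v\in L^\infty$), and no $L^1$-to-$L^p$ smoothing is needed. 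The price is that one must invoke Amann's evolution-operator framework for time-dependent generators, which requires the coefficient $\gamma(v(t,x))$ to be H\"older continuous---this is exactly why Proposition~\ref{p1} is proved first.
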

	\begin{proof}
		Let $T \in (0,T_{\max})$. Denote $J_T=[0,T]$ and $D_T\triangleq(-1,2v^*)$, where  $v^*$ is defined in \eqref{rangev}.   For $s\in D_T$, we define the elliptic operator $\textsf{A} (\cdot)z \triangleq -\gamma(s)\Delta z$, and the boundary operator $\textsf{B}z\triangleq \nabla z \cdot \textbf{n}.$  According to the assumption \eqref{gamma} on 	$\gamma$, we know that $\min_{D_T}\left\{ \gamma\right\}>0$. Thus, for any $s \in D_T$, the operator $\textsf{A}(\cdot)$ satisfies    \cite[Equation (4.6)]{Aman1990}, making it a strongly uniformly elliptic operator. By    \cite[Theorem 4.2]{Aman1990}, it can be concluded that for any $ s \in D_T,$  $(\textsf{A}(\cdot),\textsf{B})$ is normally elliptic. For $t\in J_T,$ 
		let $\tilde{\textsf{A}}(t)$  be the $L^p$ restriction of $(\textsf{A}(\cdot),\textsf{B})$ with domain 
		\begin{equation*}
			W_{\textsf{B}}^{2,p}(\Omega)\triangleq \left\{
			z\in W^{2,p}(\Omega):~~\nabla z \cdot \textbf{n}=0 ~~on~~ \partial \Omega \right\}.
		\end{equation*}
		As in the proof of   \cite[Theorem 6.1]{Aman1989}, the assumption on $\gamma$ and Proposition \ref{p1} ensure that
		\begin{equation*}
			\tilde{\textsf{A}}\in BUC^{\alpha}(J_T,\mathcal{L}(W^{2,p}(\Omega),L^p(\Omega))),
		\end{equation*}
		and   $\tilde{\textsf{A}}(J_T)$ is a regular bounded subset of $C^\alpha(J_T,\mathcal{ H}(W^{2,p}(\Omega),L^p(\Omega)))$ in the sense of  \cite[Section 4]{Aman1988}.  According to   \cite[Theorem A.1]{Aman1990}, there exists a unique parabolic fundamental solution $\tilde{U}$ corresponding to $\tilde{\textsf{A}}(t)$, as well as  time-independent positive constants $M$ and $\omega$, such that for $0\leq \tau \leq t \leq T$, it holds that
		\begin{equation}\label{4}
			\|\tilde{U}(t,\tau)\|_{\mathcal{L}(W^{2,p}(\Omega))}+\|\tilde{U}(t,\tau)\|_{\mathcal{L}(L^p(\Omega))}+(t-\tau)\|\tilde{U}(t,\tau)\|_{\mathcal{L}(L^p(\Omega),W^{2,p}(\Omega))}\leq Me^{\omega}(t-\tau).
		\end{equation}
		Since $\theta \in (\frac{1+p}{2p},1)$, according to    \cite[Theorem 5.2]{Aman1993}, it follows that
		\begin{equation*}
			\left( L^p(\Omega),W_\textsf{B}^{2,p}(\Omega) \right)_{\theta,p} \dot{=} W_\textsf{B}^{2\theta,p}(\Omega)\triangleq \left\{z\in W^{2\theta ,p}(\Omega):~~\nabla z\cdot \textbf{n}=0 ~~on~~ \partial  \Omega
			\right\},
		\end{equation*}
		and according to   \cite[Lemma II.5.1.3]{Aman1995}, there exists a  time-independent constant $M_\theta>0$ such that 
		\begin{equation}\label{5}
			\|\tilde{U}(t,\tau)\|_{\mathcal{L}(W^{2,p}_{\mathcal{B}}(\Omega))}+(t-\tau)^\theta \|\tilde{U}(t,\tau)\|_{\mathcal{L}(L^p(\Omega),W^{2,p}_{\mathcal{B}}(\Omega))}\leq M_\theta e^{\omega(t-\tau)},
		\end{equation}
		for $0\leq \tau \leq t \leq T  $.
		\par Choose $\mu>\omega$, From  \eqref{0.1}, we may infer that $v$ is a solution of the following equation
		\begin{equation*}
			\left\{
			\begin{aligned}
				& v_t+(\mu+\tilde{\textsf{A}}(\cdot ))v=F, \qquad &t \in J_T, \\
				&  v(0)=v^{in},
			\end{aligned}
			\right.
		\end{equation*}
		where $$F(t,x)\triangleq \varphi -v\gamma(v)+\mu v=\mathcal{A}^{-1}[u\gamma(v)-uf(u)]-v\gamma(v)+\mu v,\qquad (t,x) \in (J_T\times \Omega).$$ Considering the boundedness of $v$ and the continuity of $\gamma$, we infer by Lemma \ref{0} and Remark \ref{uf} that
		\begin{equation}\label{6}
			\|F\|_{L^p(\Omega)}\leq C,\qquad t\in J_T,
		\end{equation}
		for any $1<p<3$, where $C$ is a positive constant independent of $t$ and $T$.
		Applying  \cite[Remark II.2.1.2(a)]{Aman1995}, it can be concluded that $v$ has the expression
		\begin{equation}\label{7}
			v(t)=e^{-\mu t}\tilde{U}(t,0)v^{in}+\int_{0}^{t} e^{-\mu(t-\tau )} \tilde{U}(t,\tau)F(\tau)\;\rd \tau,~~t \in J_T.
		\end{equation}
		For $t \in J_T$, \eqref{5} and \eqref{6}  together with \eqref{7} give rise to the following estimate
		\begin{equation*}
			\begin{aligned}
				\|v(t)\|_{W^{2\theta,p}(\Omega)}
				&\leq M_\theta e^{(\omega-\theta)t }\|v^{in}\|_{W^{2\theta,p}}+M_\theta\int_0^t(t-\tau)^{-\theta}e^{(\omega-\mu)(t-\tau)}\|F(\tau)\|_{L^p(\Omega)}\;\rd \tau\\
				&\leq C(p,\theta) \|u^{in}\|_{L^p(\Omega)}+CM_\theta \int_0^t(t-\tau)^{-\theta}e^{(\omega-\mu)(t-\tau)}\;\rd \tau\\
				&\leq C(p,\theta),
			\end{aligned}
		\end{equation*}
		since 
		\begin{equation*}
			\mathcal{I}_\theta \triangleq \int_{0}^{\infty} \tau^{-\theta}e^{(\omega-\mu)\tau}\;\rd \tau <\infty.
		\end{equation*}
		Since $T$ is arbitrary, it follows that 
		$	\|v(t)\|_{W^{2\theta,p}(\Omega)}\leq C$  for any $t \in [0,T_{\max})$. This completes the proof.
	\end{proof}
	\begin{corollary}\label{88}
		When $N=3$,  there exists a constant $C>0$ which    depends on $\gamma$, $f$, $\Omega$ and $u^{in}$ such that 
		\begin{equation}\label{8}
			\|\nabla v (t)\|_{L^4(\Omega)}\leq C,~~~~~t \in [0,T_{\max}).
		\end{equation}
	\end{corollary}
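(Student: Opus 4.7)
The plan is to deduce the bound directly from the higher-order estimate in Lemma \ref{vgaojie} via a standard fractional Sobolev embedding. In dimension $N=3$, one has a continuous inclusion $W^{2\theta,p}(\Omega) \hookrightarrow W^{1,4}(\Omega)$ whenever $2\theta - \tfrac{3}{p} \geq 1 - \tfrac{3}{4}$, equivalently $\theta \geq \tfrac{1}{8} + \tfrac{3}{2p}$. So the whole matter would reduce to selecting admissible exponents $p$ and $\theta$ that simultaneously satisfy the hypotheses of Lemma \ref{vgaojie} (namely $p\in(1,3)$ and $\theta\in(\tfrac{1+p}{2p},1)$) together with this embedding inequality.

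To confirm that such a choice exists, I would compare the two lower bounds $\tfrac{1+p}{2p}$ and $\tfrac{1}{8}+\tfrac{3}{2p}$ on the strip $1<p<3$. A short calculation shows they coincide exactly at $p=8/3$ (both equal $11/16$) and remain strictly less than $1$ in a neighborhood of that value. Therefore, taking for instance $p=8/3$ and any $\theta$ in $(\tfrac{11}{16},1)$ yields a pair satisfying both the admissibility condition from Lemma \ref{vgaojie} and the embedding threshold. With this fixed choice, Lemma \ref{vgaojie} supplies a uniform-in-time constant $C>0$, depending only on $\gamma$, $f$, $\Omega$ and $u^{in}$, such that $\|v(t)\|_{W^{2\theta,p}(\Omega)}\leq C$ on $[0,T_{\max})$. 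The Sobolev embedding then gives
\begin{equation*}
\|\nabla v(t)\|_{L^4(\Omega)} \leq \|v(t)\|_{W^{1,4}(\Omega)} \leq C\|v(t)\|_{W^{2\theta,p}(\Omega)} \leq C,
\end{equation*}
which is exactly \eqref{8}.

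There is essentially no analytic obstacle here: the corollary is a direct consequence of the preceding lemma, and the only point requiring care is the arithmetic verification that the window of admissible $(p,\theta)$ is nonempty and lies strictly inside both constraint regions. All the heavy lifting, namely the uniform-in-time $W^{2\theta,p}$ bound for $v$, has already been accomplished through the evolution-operator argument of Lemma \ref{vgaojie}.
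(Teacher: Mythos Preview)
Your proposal is correct and follows essentially the same route as the paper: invoke the uniform $W^{2\theta,p}$ bound from Lemma~\ref{vgaojie} and then apply a fractional Sobolev embedding into $W^{1,4}(\Omega)$. If anything, your arithmetic verification that an admissible pair $(p,\theta)$ exists (e.g.\ $p=8/3$, $\theta\in(11/16,1)$) is more explicit than the paper's brief remark, which simply notes $(2\theta-1)p\in(1,p)$ and cites the embedding without checking the threshold against $q=4$.
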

	\begin{proof}Since $2\theta -1\in (\frac{1}{p},1)$, we have
		$(2\theta-1)p\in \left(1,p\right)$, with any $p\in(1,3)$. It then follows from the 3D Sobolev embedding theorem (see, e.g., \cite[Eqn. (5.9)]{Aman1993}) and Lemma \ref{vgaojie} that $\|\nabla v\|_{L^4(\Omega)}\leq C$ for $t\in[0,T_{\max})$.
	\end{proof}

	\subsection{Uniform-in-time boundedness for $\|u\|_{L^q(\Omega)}$ with $q>3/2$}
	
	\begin{lemma}
		When $N=3$, for any $q\in \left(\frac{3}{2},\infty \right)$, there exists a constant $C>0$ depending on $\gamma$, $f$, $\Omega$, $u^{in}$ and $q$  such that 
		\begin{equation}\label{u}
			\|u(t)\|_{L^q(\Omega)}\leq C,~~~~~~~t \in [0,T_{\max}).
		\end{equation}
	\end{lemma}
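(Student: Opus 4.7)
The plan is to derive a closed differential inequality for $\int_\Omega u^q\,\rd x$ by standard $L^q$ energy testing, exploiting the uniform $L^4$-bound on $\nabla v$ already secured in Corollary~\ref{88}, together with the slightly super-linear absorption provided by $uf(u)$ via Lemma~\ref{uf(u)}.

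First I would multiply \eqref{35} by $qu^{q-1}$ and integrate by parts over $\Omega$; the boundary term vanishes thanks to the no-flux condition in \eqref{37}. Splitting $\nabla(u\gamma(v))=\gamma(v)\nabla u+u\gamma'(v)\nabla v$, applying Young's inequality to the cross term, and recalling that $\gamma(v)\ge\gamma_*$ and $|\gamma'(v)|^2/\gamma(v)$ is bounded on $[0,v^*]$ (thanks to Proposition~\ref{V} and \eqref{gamma}), one arrives at
\begin{equation*}
\frac{\rd}{\rd t}\int_\Omega u^q\,\rd x+c_q\int_\Omega|\nabla u^{q/2}|^2\,\rd x+q\int_\Omega u^q f(u)\,\rd x\le C_q\int_\Omega u^q|\nabla v|^2\,\rd x
\end{equation*}
for positive constants $c_q,C_q$ depending only on $q$ and the data.

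Next I would bound the right-hand side via H\"older's inequality and Corollary~\ref{88}, namely $\int_\Omega u^q|\nabla v|^2\,\rd x\le \|u^{q/2}\|_{L^4(\Omega)}^{2}\|\nabla v\|_{L^4(\Omega)}^{2}\le C\|u^{q/2}\|_{L^4(\Omega)}^{2}$. The 3D Gagliardo--Nirenberg inequality, interpolating between $L^{2/q}$ (where $\|u^{q/2}\|_{L^{2/q}}=\|u\|_{L^1}^{q/2}$ is uniformly controlled by Lemma~\ref{lemmau}) and $H^1$, then yields
\begin{equation*}
\|u^{q/2}\|_{L^4(\Omega)}^{2}\le C\|\nabla u^{q/2}\|_{L^2(\Omega)}^{2\theta}+C,\qquad \theta=\frac{3(2q-1)}{2(3q-1)},
\end{equation*}
with $\theta\in(0,1)$ for every $q>1/2$. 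Since $2\theta<2$, a further Young's inequality absorbs the resulting gradient factor into the dissipation on the left, leaving only a bounded constant on the right.

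The last ingredient is Lemma~\ref{uf(u)} applied with $\alpha=q$, $\beta=0$: for any prescribed $a_1>0$ there exists $b_1\ge 0$ such that $\int_\Omega u^q f(u)\,\rd x\ge a_1\int_\Omega u^q\,\rd x-b_1|\Omega|$. Choosing any $a_1>0$ collapses the estimate to the linear ODE inequality $\frac{\rd}{\rd t}\int_\Omega u^q\,\rd x+qa_1\int_\Omega u^q\,\rd x\le C$, from which standard arguments deliver the asserted uniform-in-time $L^q$-bound. I do not anticipate any serious obstacle; the only point requiring verification is that the Gagliardo--Nirenberg exponent $\theta$ stays strictly below $1$, which reduces to the trivial inequality $6q-3<6q-2$, and the availability of the super-linear term $uf(u)$ is precisely what lets us dispense with any smallness hypothesis in the closure step.
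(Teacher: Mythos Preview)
Your proposal is correct and follows essentially the same approach as the paper: test \eqref{35} with $u^{q-1}$, split and apply Young's inequality to the cross term, estimate $\int_\Omega u^q|\nabla v|^2\,\rd x\le C\|u\|_{L^{2q}}^q$ via Corollary~\ref{88}, interpolate between $L^1$ and $L^{3q}$ (your exponent $\theta=\tfrac{3(2q-1)}{2(3q-1)}$ is exactly the paper's $\zeta=\tfrac{6q-3}{6q-2}$), use the $3$D embedding $H^1\hookrightarrow L^6$ and Young's inequality to absorb the gradient, and close with Lemma~\ref{uf(u)} at $\alpha=q$, $\beta=0$. The only cosmetic difference is that the paper phrases the interpolation directly on $u$ (H\"older between $L^1$ and $L^{3q}$, then $\|u\|_{L^{3q}}^q=\|u^{q/2}\|_{L^6}^2\le C\|u^{q/2}\|_{H^1}^2$) rather than as Gagliardo--Nirenberg on $u^{q/2}$ with lower endpoint $L^{2/q}$; this is cleaner for $q>2$, where $L^{2/q}$ is not a normed space, and produces an extra $\varepsilon\int_\Omega u^q\,\rd x$ term that is harmlessly absorbed by the $u^qf(u)$ contribution.
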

	
	\begin{proof}
		Multiplying \eqref{35} by $u^{q-1}$  and integrating the resultant over $\Omega$ yields that
		\begin{equation}\label{40}
			\begin{aligned}
				\frac{1}{q}\frac{\rd}{\rd t}\int_\Omega u^q\;\rd x
				&=\int_\Omega u^{q-1}u_t \;\rd x	\\
				&=\int_\Omega u^{q-1}(\Delta(\gamma(v)u)-uf(u)) \;\rd x,
			\end{aligned}
		\end{equation}
		where 
		\begin{equation*}
			\begin{aligned}
				\int_\Omega u^{q-1} \Delta\left(\gamma(v)u\right)   \;\rd x
				&=-(q-1)\int_\Omega u^{q-2}\nabla u\cdot \nabla (\gamma(v)u)\;\rd x\\
				&=-(q-1)\int_\Omega \gamma'(v) u^{q-1}\nabla u\cdot \nabla v \;\rd x-(q-1)\int_\Omega \gamma(v)u^{q-2}|\nabla u|^2 \;\rd x.
			\end{aligned}
		\end{equation*}
		Recalling that  $0\leq v\leq v^*$, and $\gamma$ satisfies the condition  \eqref{gamma},  we apply  Young's inequality to obtain that 
		\begin{equation*}
			\begin{aligned}
				-(q-1)\int_\Omega \gamma'(v)u^{q-1} \nabla u\cdot \nabla v \;\rd x
				&\leq \frac{q-1}{2}\int_\Omega \gamma(v) u^{q-2}|\nabla u|^2\;\rd x+\frac{q-1}{2}\int_\Omega \frac{|\gamma'(v)|^2}{\gamma(v)}u^q|\nabla v|^2 \;\rd x\\
				&\leq\frac{q-1}{2}\int_\Omega \gamma(v) u^{q-2}|\nabla u|^2\;\rd x+ C\int_\Omega u^q|\nabla v|^2 \;\rd x,
			\end{aligned}
		\end{equation*}with a constant $C>0$ depending on $q$, $\gamma$, $f$ and $u^{in}$.
		Since  $u^{q-2}|\nabla u|^2= \frac{4}{q^2} \left|\nabla u^{\frac{q}{2}}\right|^2$, it follows from  \eqref{40} and the above estimate that
		\begin{equation}\label{x}
			\frac{\rd}{\rd t}\int_\Omega u^q\;\rd x+q\int_\Omega u^qf(u) \;\rd x+C_3 \int_\Omega\left|\nabla u^{\frac{q}{2}}\right|^2 \;\rd x
			\leq C\int_\Omega u^q|\nabla v|^2 \;\rd x,
		\end{equation}
		where $C_3$ is a positive constant and $C_3:=\min_{s\in[0,v^*]}\{\frac{2(q-1)}{q}\gamma(s) \}$.	Observing that $2s^q\leq {qs^qf(s)} +{qb_1}$ for all $s\geq0$ by taking $a_1=\frac{2}{q}$, $\alpha =q$, and $\beta=0$ in \eqref{28}, we then obtain that
		\begin{equation}\label{47}
			\frac{\rd}{\rd t}\int_\Omega u^q\;\rd x+C_3\int_\Omega\left|\nabla u^{\frac{q}{2}}\right|^2 \;\rd x+2\int_\Omega u^q \;\rd x\leq C\int_\Omega u^q|\nabla v|^2 \;\rd x +C
		\end{equation}
		with $C>0$ depending on $q$, $f$, $u^{in}$ and $\Omega$.
		Thanks to Corollary \ref{88}, we  use   H\"older's inequality to deduce that
		\begin{equation}\label{48}
			\int_\Omega u^q|\nabla v|^2\;\rd x\leq \left(\int_\Omega|\nabla v|^4\;\rd x \right)^{\frac{1}{2}}\left(\int_\Omega u^{2q} \;\rd x \right)^{\frac{1}{2}}\leq C\|u\|_{L^{2q}(\Omega)}^q. 
		\end{equation}
		Furthermore, thanks to \eqref{g}, by interpolation  and Young's inequality, there exists $\zeta=\frac{6q-3}{6q-2} \in (0,1)$ such that  for any $\varepsilon>0$, 
		\begin{equation} \label{49}
			\|u\|_{L^{2q}(\Omega)}^q
			\leq \|u\|_{L^1(\Omega)}^{q(1-\zeta)}\|u\|_{L^{3q}(\Omega)} ^{q\zeta} 
			\leq C\|u\|_{L^{3q}(\Omega)}^{q\zeta} 
			\leq \varepsilon \|u\|_{L^{3q}(\Omega)}^q+C(\varepsilon).
		\end{equation}
		On the other hand, by the 3D Sobolev embedding $L^6(\Omega)\hookrightarrow H^1(\Omega)$, we infer that
		\begin{equation} \label{50}
			\|u\|_{L^{3q}(\Omega)}^q=\|u^{\frac{q}{2}}\|_{L^{6}(\Omega)}^2\leq C\left(	\int_\Omega u^q \;\rd x+\int_\Omega |\nabla u^{\frac{q}{2}}|^2 \;\rd x\right) 
		\end{equation}
		Thus, picking $\varepsilon$ sufficiently small in \eqref{49}, it follows from \eqref{47}-\eqref{50} that
		\begin{equation*}
			\frac{\rd}{\rd t}\int_\Omega u^q\;\rd x+ \int_\Omega u^q \;\rd x +\int_\Omega |\nabla u^{\frac{q}{2}}|^2 \;\rd x
			\leq C,
		\end{equation*}
		where $C>0$ depending on $\gamma$, $f$, $\Omega$, $q$, and $u^{in}$. Then solving the above ODI completes the proof.
	\end{proof}

\noindent	\textbf{Proof of Theorem \ref{thm1} in 3D.}  Once we have \eqref{u}, we can derive the $L^\infty(\Omega)$-boundedness of $u$ by standard iterations  (see, e.g., \cite[Lemma 4.3]{AhnYoon2019}).
Then it follows from Theorem \ref{local} that $T_{\max} = \infty$, and we  complete the proof.

\section{Proof of Theorem \ref{thm2}}
This section is devoted to the proof of Theorem \ref{thm2}. We recall that $\gamma$ is non-decreasing and concave. 
\begin{lemma}
	Under the assumption of Theorem \ref{thm2}, for any $t\in [0,T_{\max})$, there holds
	\begin{equation}\label{41}
		\|u(t,\cdot )\|_{L^\infty(\Omega)}\leq \max \{ \|u^{in}\|_{L^\infty(\Omega)}, \,\beta_1\}, 
	\end{equation}
	where  $\beta_1$ is the  constant given in Remark \ref{beta}.
\end{lemma}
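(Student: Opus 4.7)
My plan is to establish \eqref{41} by a pointwise parabolic maximum-principle argument that exploits assumption \eqref{gamma2} together with the positivity in \eqref{gamma}. First, I would expand the diffusion in \eqref{35} using the Helmholtz identity $\Delta v = v-u$ from \eqref{36}, obtaining
\begin{equation*}
u_t = \gamma(v)\Delta u + 2\gamma'(v)\nabla u\cdot\nabla v + u\gamma''(v)|\nabla v|^2 + u\gamma'(v)(v-u) - uf(u).
\end{equation*}
A standard application of the elliptic maximum principle to \eqref{36} with Neumann data yields the pointwise bound $0 \le v(t,x) \le \|u(t,\cdot)\|_{L^\infty(\Omega)}$ on $[0,T_{\max})\times\bar{\Omega}$, which will be used at the critical moment below.

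Next, set $M(t) := \|u(t,\cdot)\|_{L^\infty(\Omega)}$. Since $u\in C^{1,2}$ on a compact set, $M$ is locally Lipschitz in $t$, and by Danskin's envelope theorem $M'(t) \le u_t(x^*(t),t)$ for a.e.\ $t$, where $x^*(t)\in\bar\Omega$ denotes any maximizer of $u(\cdot,t)$. At such a maximizer, the boundary condition \eqref{37} combined with $\nabla v\cdot\mathbf{n}=0$ and $\gamma(v)>0$ forces $\nabla u\cdot\mathbf{n}=0$; together with vanishing tangential derivatives this gives $\nabla u(x^*(t),t) = 0$, and a local flattening of $\partial\Omega$ plus the standard second-derivative test yield $\Delta u(x^*(t),t) \le 0$. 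Evaluating the rewritten PDE at $(x^*(t),t)$, each of the first four terms on the right is non-positive: $\gamma(v)\Delta u\le 0$ since $\gamma>0$; the gradient term vanishes; $u\gamma''(v)|\nabla v|^2\le 0$ by the concavity $\gamma''\le 0$ and $u\ge 0$; and, crucially, $u\gamma'(v)(v-u)\le 0$ by monotonicity $\gamma'\ge 0$, $u\ge 0$, and the pointwise inequality $v(x^*(t),t) \le \|v(t)\|_{L^\infty(\Omega)} \le M(t) = u(x^*(t),t)$. Therefore $u_t(x^*(t),t) \le -M(t)f(M(t))$.

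Invoking \eqref{44} of Remark \ref{beta}, this yields the scalar differential inequality
\begin{equation*}
M'(t) \le -M(t)f(M(t)) \le -(M(t) - \beta_1) \qquad \text{for a.e. } t\in[0,T_{\max}).
\end{equation*}
A standard ODE-comparison argument then gives $M(t) \le \max\{\|u^{in}\|_{L^\infty(\Omega)}, \beta_1\}$: any violation would, by continuity, produce a first crossing time $t_0$ at which $M(t_0) = \max\{\|u^{in}\|_{L^\infty(\Omega)}, \beta_1\}$ followed by a short interval $(t_0,t^*]$ on which $M > \max\{\|u^{in}\|_{L^\infty(\Omega)}, \beta_1\}$, but on that interval the inequality forces $M' \le -(M-\beta_1) \le 0$, contradicting the increase.

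The main obstacle is the rigorous pointwise analysis at \emph{boundary} maximizers, namely confirming that $\nabla u(x^*,t) = 0$ and $\Delta u(x^*,t) \le 0$ when $x^*\in\partial\Omega$, and verifying that Danskin's envelope identity applies uniformly across the interior/boundary distinction. These are standard (via reflection/flattening and Taylor expansion along the inward normal) but warrant careful statement. Once they are in place, the three structural properties of $\gamma$ (positivity from \eqref{gamma}, monotonicity and concavity from \eqref{gamma2}) combine exactly so as to absorb the nonlinear diffusion and the gradient-sensing coupling, reducing the PDE problem to the scalar ODE above.
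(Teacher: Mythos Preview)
Your argument is correct, and the core ingredients are the same as in the paper: the expansion of $\Delta(u\gamma(v))$, the elliptic comparison $v\le \|u\|_{L^\infty(\Omega)}$, the sign information from $\gamma'\ge 0$, $\gamma''\le 0$, and the bound \eqref{44}. The difference is in the packaging of the maximum principle. The paper does not work pointwise at a maximizer; instead it derives the global differential inequality
\[
u_t-\gamma(v)\Delta u-2\gamma'(v)\nabla v\cdot\nabla u+u\le u\gamma'(0)\big(\|u\|_{L^\infty(\Omega)}-u\big)+\beta_1
\]
on all of $(0,T_{\max})\times\Omega$ (using $\gamma'(v)\le\gamma'(0)$ from concavity to push the coupling term to a spatially constant coefficient), and then compares $u$ with the solution $U(t)$ of the associated ODE via the standard parabolic comparison principle; the inequality $\|u\|_{L^\infty}\le U$ then forces $U'+U\le\beta_1$. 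Your route is the direct Danskin/envelope differentiation of $M(t)=\|u(t)\|_{L^\infty(\Omega)}$.

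What each buys: your approach is slightly sharper at the maximizer (you only need $\gamma'(v)(v-u)\le 0$ there, not the global bound $u\gamma'(v)(v-u)\le u\gamma'(0)(\|u\|_{\infty}-u)$), and you avoid the auxiliary function $U$. The price is exactly the obstacle you flagged: at a boundary maximizer one must justify $\nabla u=0$ and $\Delta u\le 0$, and one must argue carefully that $M$ is Lipschitz and satisfies the differential inequality in an a.e.\ (or Dini) sense. The paper's use of the comparison principle hides this technicality inside a black box and delivers a cleaner formal presentation. Both are valid implementations of the same maximum-principle idea.
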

\begin{proof}
	It follows from \eqref{35} and \eqref{36} that
	\begin{equation*}
		\begin{aligned}
			u_t&=\Delta(\gamma(v)u)-uf(u)\\
			&=\gamma(v)\Delta u +2\gamma'(v)\nabla v \cdot \nabla u +u\gamma''(v)|\nabla v|^2+u\gamma'(v)\Delta v-uf(u)\\
			&=\gamma(v)\Delta u +2\gamma'(v)\nabla v \cdot \nabla u +u\gamma''(v)|\nabla v|^2+u\gamma'(v)(v-u)-uf(u).
		\end{aligned}
	\end{equation*}
	Noting that $v=\mathcal{A}^{-1}[u]$ and $u \leq \|u\|_{L^\infty(\Omega)} $ in $\Omega$, we  use the elliptic comparison principle to obtain that
	\begin{equation}\label{27}
		v(t,x)\leq \|u\|_{L^\infty(\Omega)},~~~~(t,x)\in [0,T_{\max})\times \Omega.
	\end{equation} 
	It  follows from \eqref{gamma2}, \eqref{rangev}, \eqref{27}  and non-negativity of $u$ that
	$$ u\gamma''(v)|\nabla v|^2 +u\gamma'(v)(v-u) \leq u\gamma'(0)(\|u\|_{L^\infty (\Omega)} -u).$$
	Therefore,
	$$u_t-\gamma(v)\Delta u-2\gamma'(v)\nabla v\cdot \nabla u +uf(u) \leq u\gamma'(0) (\|u\|_{L^\infty (\Omega)}-u),~~~~~\text{in~}(0,T_{\max})\times \Omega.$$
	Adding $u$ to both sides of the above equation, it follows from \eqref{44}  that
	\begin{equation*}
		\begin{aligned}
			u_t-\gamma(v)\Delta u-2\gamma'(v)\nabla v\cdot \nabla u +uf(u)+u &\leq u\gamma'(0) (\|u\|_{L^\infty (\Omega)}-u)+u\\
			&\leq \gamma'(0)u(\|u\|_{L^\infty(\Omega)}-u)+uf(u)+\beta_1.
		\end{aligned}
	\end{equation*}
	Therefore, $u$ satisfies
	\begin{equation}\label{23}
		\left\{
		\begin{aligned}
			u_t-\gamma(v)\Delta u-2\gamma'(v)\nabla v\cdot \nabla u +u  &\leq u\gamma'(0) (\|u\|_{L^\infty (\Omega)}-u)+\beta_1,~~~~~\text{in~}(0,T_{\max})\times \Omega,\\
			\nabla u\cdot \textbf{n} &=0, ~~~~\text{on~}(0,T_{\max})\times \partial \Omega,\\
			u(0)&=u^{in},~~~~\text{in~}\Omega.
		\end{aligned}
		\right.
	\end{equation}
	Let $U\in C^1([0,T_{\max}))$ be a solution to the following ordinary differential equation	
	\begin{equation}\label{24}
		\left\{
		\begin{aligned}
			\frac{\rd U }{\;\rd t}+U&=U\gamma'(0) (\|u\|_{L^\infty (\Omega)}-U)+\beta_1,\qquad t\in(0,T_{\max}),\\
			U(0)&=\|u^{in}\|_{L^\infty(\Omega)} .
		\end{aligned}
		\right.
	\end{equation}		
	By the parabolic comparison principle, there holds
	\begin{equation}\label{25}
		u(t,x)\leq U(t),\;\; \;\; (t,x)\in [0,T_{\max})\times \bar{\Omega}.
	\end{equation}
	In particular,  we have $\|u\|_{L^\infty(\Omega)} \leq U(t)$ for all $t \in [0,T_{\max})$. From \eqref{24} and the non-negativity of $\gamma'(0)$, it follows that
	$$\frac{\rd U}{\rd t}+U\leq \beta_1,\;\; \;\; \text{in}\;\; [0,T_{\max}).$$
	Thus,
	\begin{equation}\label{26}
		U(t)\leq U(0)e^{-t}+\beta_1(1-e^{-t})\leq\max\{ \|u^{in}\|_{L^\infty(\Omega)},\, \beta_1\},\;\; \;\; \text{in}\;\; [0,T_{\max}),
	\end{equation}and this completes the proof by \eqref{25}.
\end{proof}

\noindent \textbf{Proof of Theorem \ref{thm2}.}  Once we get \eqref{41}, we can establish the global existence and time-independent boundeness of solutions of problem \eqref{0.1}  based on Theorem \ref{thm2}. 
\section*{Acknowledgments}
Jiang is supported by National Natural Science Foundation of China (NSFC)
under grants No.~12271505 \& No.~12071084,  the Training Program of Interdisciplinary Cooperation of Innovation Academy for Precision Measurement Science and Technology, CAS (No.~S21S3202), and by Knowledge Innovation Program of Wuhan-Basic Research (No.~2022010801010135).
\normalem    
\bibliographystyle{siam}
\bibliography{PSE2023.bib}
\end{document}